%% file: main.tex
\documentclass[11pt]{article}

\usepackage[a4paper,margin=25mm]{geometry}

\usepackage[T1]{fontenc}
\usepackage{lmodern}

\usepackage{amsmath,amssymb,amsthm}
\usepackage{mathtools}

\usepackage{algorithm}
\usepackage{algpseudocode}
\algrenewcommand\algorithmicforall{\textbf{for each}}

\theoremstyle{definition}
\newtheorem{theorem}{Theorem}[section]

\newtheorem{proposition}[theorem]{Proposition}

\theoremstyle{definition}
\newtheorem{definition}[theorem]{Definition}
\newtheorem{example}[theorem]{Example}

\theoremstyle{definition}
\newtheorem{remark}[theorem]{Remark}

\input{macros.tex}
\input{papers.tex}

\begin{document}
\maketitle
\input{sections/0_abstract.tex}

\input{sections/1_introduction.tex}
\input{sections/2_preliminaries.tex}
\input{sections/3_matsumoto.tex}

\input{sections/4_main_results.tex}

\input{sections/98_Acknowledgements.tex}

\input{sections/99_bib.tex}
\end{document}

%% file: macros.tex
\title{On the Radical Computation of Parametric Ideals\\over Finite Fields}
\author{
  Kazuki Tanaka\\
  Department of Mathematical Sciences, \\
  Graduate School of Science, \\
  Tokyo Metropolitan University, \\
  1-1 Minami-Ohsawa, Hachioji 192-0397, Japan\\
  \texttt{tanaka-kazuki@ed.tmu.ac.jp}
}
\date{}

\DeclareMathOperator{\LC}{lc}
\DeclareMathOperator{\LPP}{lpp}

\DeclareMathOperator{\PREM}{prem}

\newcommand{\spn}[1]{\langle #1\rangle}
\newcommand{\set}[2]{\left\{ #1 \,\middle|\, #2 \right\}}

\newcommand{\Fq}{\mathbb{F}_q}

\newcommand{\lpp}[1]{\LPP\!\left(#1\right)}
\newcommand{\lc}[1]{\LC\!\left(#1\right)}

\newcommand{\prem}[2]{\PREM_{#1}({#2})}

%% file: papers.tex
\newcommand{\etal}{\textit{et al.\ }}
\newcommand{\symbcomp}{\textit{J. of Symbolic Computation} }
\newcommand{\issac}{\textit{Proc. of ISSAC} }
\newcommand{\casc}{\textit{Proc. of CASC} }
\newcommand{\lncs}{LNCS }

\newcommand{\gtz}{
  Gianni,\,P., Trager,\,B., Zacharias,\,G.
  (1988)
  Gr\"{o}bner Bases and Primary Decomposition of Polynomial Ideals.
  \symbcomp Vol.6, 149-167.
}

\newcommand{\ktn}{
  Kuramochi,\,R., Tanaka,\,K., Nabeshima,\,K.
  (2024)
  On the Radical of a Polynomial Ideal with Parameters.
  \casc 2024.
  \lncs Vol.14938, 193-214.
  Springer, Cham.
}
\newcommand{\m}{
  Matsumoto,\,R.
  (2001)
  Computing the Radical of an Ideal in Positive Characteristic.
  \symbcomp Vol.32, 263-271.
}

\newcommand{\w}{
  Weispfenning,\,V.
  (1992)
  Comprehensive Gr\"{o}bner Bases. 
  \symbcomp Vol.14-1, 1-29.
}

\newcommand{\gpc}{
  Gao,\,S., Platzer,\,A., Clarke,\,E.\,M.
  (2011)
  Quantifier Elimination over Finite Fields Using Gröbner Bases.
  {\lncs}Vol. 6742, 140-157. 
  Springer, Berlin Heidelberg.  

}

\newcommand{\sss}{
  Suzuki,\,A., Sato,\,Y. 
  (2006)
  A Simple Algorithm to Compute Comprehensive Gr\"{o}bner Bases. 
  \issac 2006, 326-331.
}

\newcommand{\kswjsc}{
  Kapur,\,D., Sun,\,Y., Wang,\,D.
  (2013)
  An Efficient Algorithm for Computing a Comprehensive Gr\"{o}bner System of a Parametric Polynomial System.
  \symbcomp Vol.49, 27--44.
}

\newcommand{\n}{
  Nabeshima,\,K.
  (2024)
  Generic Gr\"{o}bner basis of a parametric ideal and its application to a  Comprehensive Gr\"{o}bner System.
  \textit{AAECC}.
  Vol.35, 55-70.
}

%% file: sections/0_abstract.tex
\begin{abstract}
Matsumoto's algorithm for computing the radical of a polynomial ideal 
is generalized to the parametric setting within the framework of 
symbolic computation. 
The main tool is a comprehensive Gr\"{o}bner system over a finite field, 
also known as a parametric Gr\"{o}bner basis. 
As a result, an algorithm for computing the radical of a parametric 
ideal over a finite field is proposed.
\end{abstract}

%% file: sections/1_introduction.tex
\section{Introduction}

The radical of an ideal is a fundamental notion in algebraic geometry 
and computational algebra, as it describes the associated algebraic set 
without multiplicities. 
A celebrated result on radical computation is the algorithm of Gianni, 
Trager, and Zacharias~\cite{gtz}, which has been incorporated into major 
computer algebra systems and widely applied to various problems.

For parametric ideals over fields of characteristic zero, Kuramochi, 
Tanaka, and Nabeshima~\cite{ktn} proposed an algorithm for computing 
radicals by combining comprehensive Gr\"{o}bner systems with the 
algorithm of Gianni, Trager, and Zacharias~\cite{gtz}. 
In contrast, for non-parametric ideals over finite fields, several 
algorithms for radical computation are known, including Matsumoto's 
algorithm, which iteratively computes inverse images under the Frobenius 
map~\cite{m}. 
However, to the best of our knowledge, no algorithm has been proposed 
for the parametric case over finite fields.

In this paper, we present an algorithm for computing the radical of a 
parametric ideal over a finite field. 
The main tool of the proposed method is a comprehensive Gr\"{o}bner 
system over a finite field. 
More precisely, we generalize Matsumoto's algorithm for computing the 
radical of a polynomial ideal over a finite field to the parametric 
setting by employing comprehensive Gr\"{o}bner systems.

This paper is organized as follows. 
In Section 2, we recall basic definitions and results on comprehensive 
Gr\"{o}bner systems. 
In Section 3, we review Matsumoto's algorithm for radical computation 
over finite fields. 
In Section 4, we extend Matsumoto's algorithm to the parametric setting 
by employing comprehensive Gr\"{o}bner systems over a finite field. 
We also prove correctness and termination of the resulting algorithm.

%% file: sections/2_preliminaries.tex
\section{Comprehensive Gr\"obner systems}

\subsection{Notation}

Let $p$ be a prime number, and let $q$ be a power of $p$.
We denote by $\Fq$ the finite field of order $q$.

Let $U=\{u_1,\dots,u_m\}$ and $X=\{x_1,\dots,x_n\}$ be sets of variables with $U \cap X= \emptyset$.
We denote the parametric polynomial ring $\Fq[u_1,\dots,u_m][x_1,\dots,x_n]$ by $\Fq[U][X]$ (we often regard $U$ as parameters).
The symbol $\mathrm{Term}(X)$ means the set of terms of $X$.

Fix a term order $\prec$ on $\mathrm{Term}(X)$, and let $f \in \Fq[U][X]$.
Then $\lc{f}$, and $\lpp{f}$ denote the leading coefficient, and the leading power product of $f$, respectively.

For $a=(a_1,\dots,a_m)\in \Fq^m$, we denote by 
$
\pi_a : \Fq[U][X] \to \Fq[X]
$
the specialization map obtained by substituting $a$ for $U$.
For a set of polynomials $E\subset \Fq[U]$, we define
\[
\mathbf{V}(E)=\set{a\in \Fq^m}{g(a)=0 \text{ for all }g\in E}.
\]
For finite sets $C_1, C_2 \subset \Fq[U]$, we set
$C_1 \cdot C_2 = \{fg \mid f \in C_1,\ g \in C_2\}$.

\subsection{Comprehensive Gr\"obner Systems}

We now recall some notions related to comprehensive Gr\"obner systems in the finite field setting.

\begin{definition}[Algebraic partition]
Let $S$ be a subset of $\mathbb{F}_q^m$.
We say that non-empty subsets $S_1, \ldots, S_r \subset \mathbb{F}_q^m$ form an \textbf{algebraic partition of $S$} if the following conditions are satisfied:
\begin{enumerate}
\renewcommand{\labelenumi}{(\roman{enumi})}
\item
Each $S_i$ is a locally closed set, i.e., there exist finite sets $E_i, N_i \subset \Fq[U]$ such that
\[
S_i = \mathbf{V}(E_i)\setminus \mathbf{V}(N_i).
\]
\item
The sets $S_1,\ldots,S_r$ are a partition of $S$, i.e.,
\[
S=\bigcup_{i=1}^r S_i,
\quad \text{and} \quad
S_i\cap S_j=\emptyset \;\; (i\neq j).
\]
\end{enumerate}
\end{definition}

\begin{definition}[Comprehensive Gr\"obner system (CGS)]\label{def:cgs}
Fix a term order $\prec$ on $\text{Term}(X)$.
Let $F$ be a finite subset of $\Fq[U][X]$, and let $E, N\subset \Fq[U]$ be finite sets.
A finite set of triples
\[
\mathcal{G}=\{(E_1,N_1,G_1),\ldots,(E_r,N_r,G_r)\}
\qquad(E_i,N_i\subset \Fq[U],\; G_i\subset \Fq[U][X])
\]
is called a \textbf{comprehensive Gr\"obner system (CGS)} of $\langle F\rangle$ on $\mathbf{V}(E)\setminus \mathbf{V}(N)$ with respect to $\prec$ if the following conditions are satisfied:
\begin{enumerate}
\renewcommand{\labelenumi}{(\roman{enumi})}
\item
The sets
\[
\mathbf{V}(E_1)\setminus \mathbf{V}(N_1),\ldots,\mathbf{V}(E_r)\setminus \mathbf{V}(N_r)
\]
form an algebraic partition of $\mathbf{V}(E)\setminus \mathbf{V}(N)$.
\item
For all $a\in \mathbf{V}(E_i)\setminus \mathbf{V}(N_i)$ and $g\in G_i$, 
$\pi_a(\lc{g})\neq 0$ and the set $\pi_a(G_i)$ is a minimal Gr\"obner basis of $\spn{\pi_a(F)}$ with respect to $\prec$ in $\Fq[X]$ if not zero ideal; otherwise $G_i=\{0\}$.
\end{enumerate}
Each triple $(E_i,N_i,G_i)$ is called a \textbf{segment} of $\mathcal{G}$.
\end{definition}

\subsection{Computing CGSs over Finite Fields}

Several algorithms for computing CGSs are known~\cite{ksw, n, ss};
however, these algorithms all assume that the parameter space is a
Cartesian product of an algebraically closed field, and hence cannot
be directly applied when the parameter space is $\Fq^m$.
In this paper, we give an algorithm for computing CGSs over $\Fq^m$.
Following the approach of Kapur--Sun--Wang~\cite{ksw}, this algorithm
is constructed by replacing its emptiness check for locally closed sets
with \textsf{IsEmptyLCS} (Algorithm~\ref{alg:emptiness}), which is
valid over $\Fq^m$.
This emptiness check is discussed in detail in
Section~4.4.
We first introduce the following notion from~\cite{ksw}.

\begin{definition}[Minimal Dickson basis]\label{def:mdbasis}
  Let $G \subset \Fq[U][X]$. A subset $G_m \subset G$ is called a
  \textbf{minimal Dickson basis} of $G$ if the following conditions
  are satisfied:
  \begin{enumerate}
    \renewcommand{\labelenumi}{(\roman{enumi})}
    \item For every $g \in G$, there exists $f \in G_m$ such that
      $\lpp{f} \mid \lpp{g}$.
    \item For any two distinct $f_1, f_2 \in G_m$, neither
      $\lpp{f_1} \mid \lpp{f_2}$ nor $\lpp{f_2} \mid \lpp{f_1}$.
  \end{enumerate}
\end{definition}

The following theorem, which is a finite-field analogue of
\cite[Theorem 4.3]{ksw}, is the basis of Algorithm~\ref{alg:cgs}.

\begin{theorem}\label{thm:ksw}
Let $F$ be a finite set of $F_q[U,X]$, E finite subset of $F_q[U]$ and $G$ the reduced Gr\"obner basis of
$\langle F \cup E \rangle \subset \Fq[U \cup X]$ with respect to a block order with $X\gg U$,
let $G_r = G \cap \Fq[U]$, and let $G_m$ be a minimal
Dickson basis of $G \setminus G_r$. If
$a \in \mathbf{V}(G_r) \setminus \mathbf{V}(h)$, where
$h = \mathrm{lcm}\{\lc{g} \mid g \in G_m\}$, then $\pi_a(G_m)$ is a
minimal Gr\"obner basis of $\langle \pi_a(F) \rangle$ with respect to
$\prec$.
\end{theorem}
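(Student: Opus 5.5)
The plan is to follow the proof of \cite[Theorem 4.3]{ksw} essentially verbatim. That argument never uses algebraic closedness of the parameter field; it only uses specialization of Gr\"obner bases under a block order. Throughout, fix $a\in\mathbf{V}(G_r)\setminus\mathbf{V}(h)$.

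\textbf{Step 1: membership.} Since $a\in\mathbf{V}(E)$ is implicit (we have $E\subset\langle F\cup E\rangle$, so every element of $E$ reduces to $0$ modulo $G$, and by the block order only elements of $G_r$ are used, so $E\subset\langle G_r\rangle$), we get $\pi_a(\langle F\cup E\rangle)=\langle\pi_a(F)\rangle$. Hence $\pi_a(G)$ generates $\langle\pi_a(F)\rangle$. Moreover $\pi_a(G_r)=\{0\}$, so $\pi_a(G\setminus G_r)$ already generates this ideal. In particular $\pi_a(G_m)\subset\langle\pi_a(F)\rangle$. Because $\pi_a(h)\neq0$, each $g\in G_m$ satisfies $\pi_a(\lc{g})\ne0$, and therefore $\lpp{\pi_a(g)}=\lpp{g}$.

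\textbf{Step 2: leading terms.} I must show that for every nonzero $f\in\langle\pi_a(F)\rangle$, some $\lpp{g}$ with $g\in G_m$ divides $\lpp{f}$. I would invoke the specialization theorem of Kalkbrener (as used in \cite{ksw}). For the ideal $I=\langle F\cup E\rangle$ and the maximal ideal $\mathfrak m_a=\langle u_i-a_i\rangle$ of $\Fq[U]$, which contains $I\cap\Fq[U]=\langle G_r\rangle$, it states that $\pi_a(G\setminus G_r)$ restricted to the $g$ with $\pi_a(\lc g)\neq0$ is a Gr\"obner basis of $\pi_a(I)$. Its proof is purely algebraic over any field $K$ with $a\in K^m$. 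I would briefly re-derive it, or cite it, noting that the argument uses only that $\pi_a$ is a ring homomorphism to a field. This yields a Gr\"obner basis $\{\pi_a(g): g\in G\setminus G_r,\ \pi_a(\lc g)\ne0\}$.

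Next, take such a $g$. By property (i) of Definition~\ref{def:mdbasis} there is $g'\in G_m$ with $\lpp{g'}\mid\lpp{g}$, and $\pi_a(\lc{g'})\ne0$. So the leading terms of $\pi_a(G_m)$ divide all leading terms of that Gr\"obner basis, and $\pi_a(G_m)$ is itself a Gr\"obner basis of $\langle\pi_a(F)\rangle$.

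\textbf{Step 3: minimality.} Leading power products are preserved under $\pi_a$, and by (ii) of Definition~\ref{def:mdbasis} they are pairwise non-divisible, so no element is redundant. Normalizing the leading coefficients, which are nonzero, gives a minimal Gr\"obner basis. If $G\setminus G_r$ is empty, the ideal is zero, and this degenerate case is handled separately.

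The main obstacle is Step 2: confirming that the specialization theorem remains valid when $a\in\Fq^m$ rather than in $\overline{\Fq}^m$. I expect it does, since it concerns the single point $a$ and the homomorphism $\pi_a$ with target the field $\Fq$; no Nullstellensatz is involved.
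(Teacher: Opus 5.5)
\textbf{There is a genuine gap in Step~2.} Your overall plan matches the paper's one-line proof: follow Kapur--Sun--Wang and observe that algebraic closedness is never used. But the specialization statement you invoke in Step~2 is false, and Step~2 is where the real content of the theorem lies.

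It is not true that $a\in\mathbf{V}(I\cap\Fq[U])$ alone makes $\{\pi_a(g) : g\in G\setminus G_r,\ \pi_a(\lc{g})\neq 0\}$ a Gr\"obner basis of $\pi_a(I)$. Take $F=\{ux-1,\,y\}\subset\Fq[u][x,y]$ and $E=\emptyset$. Then $G=\{ux-1,\,y\}$ is the reduced Gr\"obner basis for the block order (the leading monomials are coprime), and $G_r=\emptyset$. At $a=0$ your set is $\{y\}$, whereas $\spn{\pi_0(F)}=\spn{-1,y}=\Fq[x,y]$.

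In this example $G_m=G$ and $h=u$, so the theorem correctly excludes $a=0$. Your argument, however, would ``prove'' the false conclusion there, because it uses $\pi_a(h)\neq0$ only to preserve the leading terms of $G_m$. Kalkbrener's theorem is only a criterion: the elements with non-vanishing leading coefficient form a Gr\"obner basis \emph{iff} every $\pi_a(g)$ with $\pi_a(\lc{g})=0$ reduces to $0$ modulo them. So the obstacle you flagged (validity over $\Fq$ rather than $\overline{\Fq}$) is not the real one. The missing step is controlling the elements of $G\setminus(G_r\cup G_m)$ whose leading coefficients vanish at $a$.

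That step is where the Dickson-basis property and $\pi_a(h)\neq 0$ do the work.
\begin{enumerate}
\item For $g\in G\setminus(G_r\cup G_m)$, choose $g'\in G_m$ with $\lpp{g'}\mid\lpp{g}$. Then $T=\lc{g'}\,g-\lc{g}\,\frac{\lpp{g}}{\lpp{g'}}\,g'$ lies in $I$ and has $X$-leading power product strictly below $\lpp{g}$.
\item Its standard representation with respect to $G$ in the block order therefore involves only elements $g''$ with $\lpp{g''}\prec\lpp{g}$.
\item Specialize, divide by $\pi_a(\lc{g'})\neq 0$, use $\pi_a(G_r)=\{0\}$, and induct on $\lpp{g}$. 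This gives a representation of $\pi_a(g)$ in terms of $\pi_a(G_m)$ with all terms $\preceq\lpp{g}$. In particular $\spn{\pi_a(G_m)}=\spn{\pi_a(F)}$.
\item Lift each $S$-polynomial of two elements of $\pi_a(G_m)$ to the corresponding combination in $I$ and repeat the argument. This yields a representation with all terms strictly below the lcm of the two leading power products, so Buchberger's criterion shows that $\pi_a(G_m)$ is a Gr\"obner basis.
\end{enumerate}
Your Steps~1 and~3 are fine. With this replacement for Step~2 the proof is complete, and it indeed uses nothing about algebraic closedness.
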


\begin{proof}
The proof of \cite[Theorem 4.3]{ksw} does not use the algebraic
closedness of the field in which the parameters take their values,
and hence remains valid for specializations at points of $\Fq^m$.
\end{proof}

Based on Theorem~\ref{thm:ksw}, we obtain the following algorithm for
computing CGSs over $\Fq^m$, where $\textsf{IsEmptyLCS}(E, N)$ is a
boolean-valued function that decides whether
$\mathbf{V}(E) \setminus \mathbf{V}(N)$ is empty, and
$\mathrm{lcm}(h_1, \ldots, h_s)$ denotes the least common multiple of
$h_1, \ldots, h_s \in \Fq[U]$. We set
$N \cdot \{h_1 \cdots h_{i-1}\} = N$ for $i = 1$ in line~27.

\begin{algorithm}[H]
  \caption{\sf{CGSoverFiniteField}}
  \label{alg:cgs}
  \begin{algorithmic}[1]
    \Require $E, N \subset \Fq[U]$: finite sets; $F \subset \Fq[U][X]$: a finite
      set; $\prec$: a term order on $\mathrm{Term}(X)$.
    \Ensure A CGS of $\langle F \rangle$ on $\mathbf{V}(E) \setminus \mathbf{V}(N)$
      with respect to $\prec$.
    \If{\textsf{IsEmptyLCS}$(E, N)$}
      \State \Return $\emptyset$;
    \EndIf
    \State $G \gets$ the reduced Gr\"obner basis of $\spn{F \cup E}$ with respect to  a block order with $X\gg U$;
    \If{$1 \in G$}
      \State \Return $\{(E, N, \{1\})\}$;
    \EndIf
    \State $G_r \gets G \cap \Fq[U]$;
    \If{\textbf{not} \textsf{IsEmptyLCS}$(E, G_r \cdot N)$}
      \State $\mathcal{P} \gets \{(E, G_r \cdot N, \{1\})\}$;
    \Else
      \State $\mathcal{P} \gets \emptyset$;
    \EndIf
    \If{\textsf{IsEmptyLCS}$(G_r, N)$}
      \State \Return $\mathcal{P}$;
    \EndIf
    \If{$G \setminus G_r = \emptyset$}
      \State \Return $\mathcal{P} \cup \{(G_r, N, \{0\})\}$;
    \EndIf
    \State $G_m \gets$ a minimal Dickson basis of $G \setminus G_r$;
    \State $h \gets \mathrm{lcm}\{\lc{g} \mid g \in G_m\}$;
    \State $\{h_1, \ldots, h_s\} \gets$ the distinct irreducible factors of $h$;
    \If{\textbf{not} \textsf{IsEmptyLCS}$(G_r, N \cdot \{h\})$}
      \State $\mathcal{P} \gets \mathcal{P} \cup \{(G_r, N \cdot \{h\}, G_m)\}$;
    \EndIf
    \For{$i = 1, \ldots, s$}
      \State $\mathcal{P} \gets \mathcal{P} \cup
        \textsf{CGSoverFiniteField}(G_r \cup \{h_i\},\,
        N \cdot \{h_1 \cdots h_{i-1}\},\, G \setminus G_r,\, \prec)$;
    \EndFor
    \State \Return $\mathcal{P}$;
  \end{algorithmic}
\end{algorithm}

\begin{proposition}\label{prop:cgs-correct}
Algorithm~\ref{alg:cgs} is correct and terminates in finitely many steps.
\end{proposition}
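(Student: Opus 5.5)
We prove correctness and termination separately, assuming \textsf{IsEmptyLCS} correctly decides emptiness over $\Fq^m$ (Algorithm~\ref{alg:emptiness}, treated in Section~4.4).

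\textbf{Correctness.} We argue by induction on the recursion depth, which is finite by the termination argument below. Fix a call with input $(E,N,F)$ and put $S=\mathbf{V}(E)\setminus\mathbf{V}(N)$. Two facts are used throughout:
\[
\mathbf{V}(E)\setminus\mathbf{V}(G_r\cdot N)=S\setminus\mathbf{V}(G_r),
\qquad
\mathbf{V}(G_r)\subseteq\mathbf{V}(E).
\]
The second holds because $E\subset\spn{F\cup E}\cap\Fq[U]=\spn{G_r}$. We then check that the segments returned partition $S$ and that each satisfies condition (ii) of Definition~\ref{def:cgs}.
\begin{enumerate}
\renewcommand{\labelenumi}{(\alph{enumi})}
\item \emph{The segment $(E,G_r\cdot N,\{1\})$.} If $a\in S\setminus\mathbf{V}(G_r)$, some $g\in G_r\subset\spn{F\cup E}$ has $g(a)\neq0$. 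Since $\pi_a(E)=0$, we get $\pi_a(\spn{F\cup E})=\spn{\pi_a(F)}$, and this ideal contains the nonzero constant $g(a)$. Hence $\spn{\pi_a(F)}=\spn{1}$, and $\{1\}$ is a minimal Gr\"obner basis.
\item \emph{The early return when $1\in G$.} Here $\spn{F\cup E}=\spn{1}$, so $G_r=\{1\}$ and the same argument applies to all of $S$.
\item \emph{The segment $(G_r,N,\{0\})$ when $G=G_r$.} In this case $\spn{F\cup E}=\spn{G_r}\subset\Fq[U]$. Every $f\in F$ therefore lies in $\spn{G_r}$, so $\pi_a(f)=0$ for $a\in\mathbf{V}(G_r)$, and the ideal specializes to the zero ideal.
\item \emph{The segment $(G_r,N\cdot\{h\},G_m)$.} This is exactly Theorem~\ref{thm:ksw}. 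Nonvanishing of the leading coefficients holds because $h(a)\neq0$ and each $\lc{g}$ divides $h$.
\item \emph{The recursive calls.} The remaining set $\mathbf{V}(G_r)\setminus\mathbf{V}(N)\cap\mathbf{V}(h)$ is the disjoint union over $i$ of
\[
\mathbf{V}(G_r\cup\{h_i\})\setminus\mathbf{V}(N\cdot\{h_1\cdots h_{i-1}\}).
\]
This is because $h(a)=0$ if and only if some irreducible factor $h_i$ vanishes at $a$, and $i$ is taken to be the first such index. On each piece, $\spn{(G\setminus G_r)\cup G_r\cup\{h_i\}}$ specializes to the same ideal as $\spn{F}$, since $\spn{G}=\spn{F\cup E}$ and $G_r\cup\{h_i\}$ vanishes on the piece. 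The induction hypothesis then applies.
\end{enumerate}
Segments that are empty are discarded by \textsf{IsEmptyLCS}, so all segments are nonempty as the definition of an algebraic partition requires. The union of the pieces is $(S\setminus\mathbf{V}(G_r))\cup(S\cap\mathbf{V}(G_r))=S$, and the pieces are pairwise disjoint.

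\textbf{Termination.} This is the main obstacle. We plan to show that along any branch the ideals $\spn{E}\subset\Fq[U]$ strictly increase, and then apply Noetherianity. In the recursive call the new parameter set is $G_r\cup\{h_i\}$.
\begin{itemize}
\item We have $\spn{E}\subseteq\spn{G_r}$, as shown above.
\item We claim $h_i\notin\spn{G_r}$. Since $G$ is reduced, no term of any $g\in G\setminus G_r$ is divisible by a leading term of an element of $G_r$. In particular each $\lc{g}$, viewed in $\Fq[U]$, is already reduced modulo $G_r$, and it is nonzero, so $\lc{g}\notin\spn{G_r}$.
\item The claim is then extended to the irreducible factor $h_i$. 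If $h_i\in\spn{G_r}$, then every multiple of $h_i$ lies in $\spn{G_r}$, including every $\lc{g}$ divisible by $h_i$, which contradicts the previous step.
\end{itemize}
Hence $\spn{E}\subseteq\spn{G_r}\subsetneq\spn{G_r\cup\{h_i\}}$. Any infinite branch would therefore give an infinite strictly ascending chain of ideals in $\Fq[U]$, which is impossible. Since the branching at each call is finite, K\"onig's lemma shows that the whole recursion tree is finite.
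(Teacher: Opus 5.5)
Your proposal is correct and follows essentially the same route as the paper. It uses the same three-way disjoint decomposition of $\mathbf{V}(E)\setminus\mathbf{V}(N)$ (via $E\subset\langle G_r\rangle$) and the same per-segment justifications through $\langle\pi_a(F)\rangle=\langle\pi_a(G)\rangle$ on $\mathbf{V}(E)$ and Theorem~\ref{thm:ksw}; your termination argument ($h_i\notin\langle G_r\rangle$ by reducedness of $G$, so $\langle E\rangle$ strictly increases, plus Noetherianity) is likewise the paper's, stated with a more explicit normal-form check that $\lc{g}\notin\langle G_r\rangle$ and a K\"onig's-lemma remark the paper omits.
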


\begin{proof}
Since $E \subset \langle G_r \rangle$, the set
$\mathbf{V}(E) \setminus \mathbf{V}(N)$ is the disjoint union of
$\mathbf{V}(E) \setminus \mathbf{V}(G_r \cdot N)$,
$\mathbf{V}(G_r) \setminus \mathbf{V}(N \cdot \{h\})$, and
$\mathbf{V}(G_r \cup \{h_i\}) \setminus
\mathbf{V}(N \cdot \{h_1 \cdots h_{i-1}\})$ for $i = 1, \ldots, s$.
Since only empty sets are discarded, condition (i) of
Definition~\ref{def:cgs} holds by induction on the recursion.

Note that $\langle \pi_a(F) \rangle = \langle \pi_a(G) \rangle$ for
every $a \in \mathbf{V}(E)$. If $1 \in G$, or if
$\pi_a(g) \neq 0$ for some $g \in G_r$, then
$\langle \pi_a(F) \rangle = \langle 1 \rangle$; this justifies the
segments with $\{1\}$. If $G \setminus G_r = \emptyset$, then
$\langle \pi_a(F) \rangle = \langle \pi_a(G_r) \rangle = \{0\}$ on
$\mathbf{V}(G_r)$; this justifies the segment with $\{0\}$. For every
$a \in \mathbf{V}(G_r) \setminus \mathbf{V}(h)$, the set $\pi_a(G_m)$
is a minimal Gr\"obner basis of $\langle \pi_a(F) \rangle$ by
Theorem~\ref{thm:ksw}. The remaining segments are correct by
induction.

Finally, we show that the algorithm terminates. Whenever the algorithm
makes a recursive call, we have $h_i \notin \langle G_r \rangle$ for each $i$;
indeed, since $h_i$ divides $\lc{g}$ for some $g \in G_m$, if
$h_i \in \langle G_r \rangle$, then $\lc{g} \in \langle G_r \rangle$ as well,
and hence the leading monomial of $g$ could be reduced further by $G_r$,
which contradicts the assumption that $G$ is reduced. This shows that the
ideal $\langle E \rangle$ strictly increases at each recursive call. Since
$\Fq[U]$ is a Noetherian ring, such a strictly increasing sequence of ideals
cannot continue indefinitely, and hence the algorithm terminates after
finitely many recursive calls.
\end{proof}

\begin{example}
Consider
\[
F=
\{\, ax+y,\ x+bz,\ xz^3-az^4,\ yz^4+z^5 \,\}
\subset \mathbb{F}_3[a,b][x,y,z],
\]
with the lexicographic order such that \(x\succ y\succ z\).  A comprehensive Gr\"obner system
of \(\langle F\rangle\) is
\[
\begin{aligned}
\mathcal{G}=\{&
(\{0\},\{a+b\},
 \{x+bz,\ y-abz,\ (a+b)z^4\}),\\
&(\{a+b\},\{b^2-1\},
 \{x+bz,\ y+b^2z,\ (b^2-1)z^5\}),\\
&(\{a+b,\ b^2-1\},\{1\},
 \{x+bz,\ y+z\})\}.
\end{aligned}
\]
This set $\mathcal{G}$ means the following:
\begin{itemize}
\item If \((a,b)\) belongs to \(\mathbb{F}_3^2\setminus \mathbf{V}(a+b)\), then
\[
\{x+bz,\ y-abz,\ (a+b)z^4\}
\]
is a minimal Gr\"obner basis of \(\langle \pi_{(a,b)}(F)\rangle\).

\item If \((a,b)\) belongs to \(\mathbf{V}(a+b)\setminus \mathbf{V}(b^2-1)\), then
\[
\{x+bz,\ y+b^2z,\ (b^2-1)z^5\}
\]
is a minimal Gr\"obner basis of \(\langle \pi_{(a,b)}(F)\rangle\).

\item If \((a,b)\) belongs to \(\mathbf{V}(a+b,b^2-1)\), then
\[
\{x+bz,\ y+z\}
\]
is a minimal Gr\"obner basis of \(\langle \pi_{(a,b)}(F)\rangle\).
\end{itemize}
\end{example}

%% file: sections/3_matsumoto.tex
\section{Matsumoto's Algorithm}

Here we review an algorithm, introduced by Matsumoto in \cite{m}, for computing the radical of an ideal in the case where the coefficient field is $\Fq$.
For an ideal $I \subset \Fq[X]$, the \textbf{radical} of $I$ is defined as
\[
\sqrt{I} = \{ f \in \Fq[X] \mid f^k \in I
\text{ for some } k \in \mathbb{Z}_{>0} \}.
\]
Let
\[
\varphi : \Fq[X] \to \Fq[X], \quad
f(x_1, \ldots, x_n) \mapsto f(x_1^q, \ldots, x_n^q)
\]
be the Frobenius map.

\begin{theorem}[\cite{m}, Chapter 2]\label{thm:matsumoto-radical}
Let $I$ be an ideal of $\Fq[X]$. Then the following properties hold:
\begin{enumerate}
\renewcommand{\labelenumi}{(\roman{enumi})}
\item $I \subset \varphi^{-1}(I) \subset \sqrt{I}$.
\item If $I \ne \sqrt{I}$, then $I \subsetneq \varphi^{-1}(I)$.
\end{enumerate}
\end{theorem}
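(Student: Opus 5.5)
The plan is to prove both parts directly from one basic fact: over $\Fq$, raising to the $q$-th power is related to the Frobenius map by
\[
f^q = \varphi(f) \quad \text{for all } f \in \Fq[X].
\]
This holds because $f \mapsto f^q$ is a ring endomorphism in characteristic $p$, since $q$ is a power of $p$. Because it fixes every element of $\Fq$ ($c^q = c$), it sends $\sum c_\alpha X^\alpha$ to $\sum c_\alpha X^{q\alpha} = \varphi(f)$. I would establish this identity first. I would also note that $\varphi$ is a ring homomorphism, so $\varphi^{-1}(I)$ is an ideal.

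For (i), the first inclusion: if $f \in I$, then $\varphi(f) = f^q = f \cdot f^{q-1} \in I$, so $f \in \varphi^{-1}(I)$.

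For the second inclusion: if $\varphi(f) \in I$, then $f^q \in I$, so $f \in \sqrt{I}$ by the definition of the radical.

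For (ii), I would argue by contraposition. Assume $\varphi^{-1}(I) = I$; the goal is to show $I = \sqrt{I}$. Take $f \in \sqrt{I}$, so $f^k \in I$ for some $k$. Choose $e$ with $q^e \ge k$. Then
\[
\varphi^e(f) = f^{q^e} = f^{q^e - k} f^k \in I.
\]
Since $\varphi^{-1}(I) = I$, we also have $\varphi^{-e}(I) = I$, by induction on $e$: $\varphi^{-(e)}(I) = \varphi^{-1}(\varphi^{-(e-1)}(I)) = \varphi^{-1}(I) = I$. Hence $f \in I$, so $\sqrt{I} \subset I$, and therefore $I = \sqrt{I}$. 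Combined with the inclusion $I \subset \varphi^{-1}(I)$ from (i), this gives (ii).

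The only real obstacle is making sure the identity $\varphi(f) = f^q$ is justified carefully. This is exactly where the choice of $\Fq$ as the coefficient field matters: the argument needs $c^q = c$ for every coefficient, and it would fail over a larger field. Everything else follows by unwinding definitions.
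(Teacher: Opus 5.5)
Your proof is correct and complete. The paper gives no proof of this statement; it is quoted from Matsumoto's paper, so there is no in-text argument to compare against. Your derivation from the identity $\varphi(f)=f^{q}$ on $\Fq[X]$ is a full self-contained proof in the paper's setting. As you note, that identity uses $c^{q}=c$ for $c\in\Fq$ and would fail over a proper extension of $\Fq$.

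One by-product is worth recording. The step showing $f\in\varphi^{-e}(I)$ whenever $f^{k}\in I$ and $q^{e}\ge k$ does not use the hypothesis $\varphi^{-1}(I)=I$. Together with (i), your argument therefore shows $\sqrt{I}=\bigcup_{e\ge 0}\varphi^{-e}(I)$. Now apply this to a finite generating set $f_1,\dots,f_s$ of $\sqrt{I}$ with $f_i^{k_i}\in I$. You get $\varphi^{-e}(I)=\sqrt{I}$ as soon as $q^{e}\ge\max_i k_i$. This recovers the stabilization statement that follows the theorem, with an explicit bound on the number of iterations and without invoking (ii).
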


By this theorem, we obtain an ascending chain of ideals
\[
I \subset \varphi^{-1}(I) \subset \varphi^{-1}\circ\varphi^{-1}(I) \subset \cdots .
\]
Since $\Fq[X]$ is a Noetherian ring, there exists $d \in \mathbb{Z}_{\ge 0}$ such that
\[
\varphi^{-d}(I) = \varphi^{-(d+1)}(I) = \sqrt{I},
\]
where $\varphi^{-d}$ denotes the $d$-fold composition
$\underbrace{\varphi^{-1} \circ \cdots \circ \varphi^{-1}}_{d}$,
and $\varphi^{0}$ is the identity map.

Hence, the radical can be computed by the following procedure.

\begin{enumerate}
\renewcommand{\labelenumi}{(\roman{enumi})}
\item For an ideal $I$, compute $J=\varphi^{-1}(I)$.
\item Test whether $I=J$.
\item If they are equal, then $I=\sqrt{I}$. Otherwise, replace $I$ by $J$ and repeat.
\end{enumerate}

Therefore, to compute the radical, we need to compute inverse images under the Frobenius map and test whether two ideals are equal. 
The latter can be done using a reduced Gr\"obner basis. 

The following theorem is fundamental for computing inverse images under the Frobenius map.

\begin{theorem}[\cite{m}, Proposition 2.5]\label{thm:image}
Let $F$ be a finite set in $\Fq[X]$. Introduce a new set of variables $Y=(y_1,\ldots,y_n)$ and define the ideal in $\Fq[X,Y]$
\[
J = \spn{F}_{\Fq[X,Y]} + \spn{y_1-x_1^q,\ldots,y_n-x_n^q}.
\]
Then $\varphi^{-1}(\spn{F})$ is generated by the polynomials obtained from the elements of $J\cap \Fq[Y]$ by replacing each $y_i$ with $x_i$.
\end{theorem}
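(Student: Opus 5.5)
The plan is to identify $\varphi^{-1}(\langle F\rangle)$ with a contraction ideal, which the elimination ideal $J\cap\Fq[Y]$ computes. The key object is the $\Fq$-algebra homomorphism
\[
\psi:\Fq[Y]\to\Fq[X],\qquad y_i\mapsto x_i^q .
\]
Write $\iota:\Fq[Y]\to\Fq[X]$ for the isomorphism $y_i\mapsto x_i$. Then $\psi=\varphi\circ\iota$, because $\varphi$ fixes the coefficients (it only substitutes $x_i^q$ for $x_i$). Consequently $\varphi^{-1}(\langle F\rangle)=\iota(\psi^{-1}(\langle F\rangle))$. It therefore suffices to prove
\[
J\cap\Fq[Y]=\psi^{-1}(\langle F\rangle).
\]
Once this holds, replacing each $y_i$ by $x_i$ sends a generating set of $J\cap\Fq[Y]$ to a generating set of $\varphi^{-1}(\langle F\rangle)$, since $\iota$ is a ring isomorphism.

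Both inclusions go through the surjection
\[
\Phi:\Fq[X,Y]\to\Fq[X],\qquad x_i\mapsto x_i,\; y_i\mapsto x_i^q .
\]
Its kernel is $K=\langle y_1-x_1^q,\ldots,y_n-x_n^q\rangle$. To see this, treat a polynomial as a polynomial in $Y$ over $\Fq[X]$ and substitute $y_i=(y_i-x_i^q)+x_i^q$. This gives $g\equiv\Phi(g)\pmod K$ for every $g$, so $\ker\Phi=K$. The restriction of $\Phi$ to $\Fq[Y]$ is exactly $\psi$.

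For ``$\subseteq$'', take $g\in J\cap\Fq[Y]$ and write $g=\sum a_jf_j+\sum b_i(y_i-x_i^q)$. Applying $\Phi$ kills the second sum, so $\psi(g)=\Phi(g)=\sum\Phi(a_j)f_j\in\langle F\rangle$.

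For ``$\supseteq$'', take $g\in\Fq[Y]$ with $\psi(g)\in\langle F\rangle$. By the congruence above, $g-\psi(g)=g-\Phi(g)\in K$, so
\[
g=\psi(g)+(g-\psi(g))\in\langle F\rangle_{\Fq[X,Y]}+K=J .
\]

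The only step needing genuine care is the kernel identity $\ker\Phi=K$, equivalently the congruence $g\equiv\Phi(g)\pmod K$. The rest is formal. I would prove it by induction on the $Y$-degree, or by a Taylor-type expansion of each monomial $y^\beta$ around $x^{q\beta}$. That the coefficient field is $\Fq$ plays no role here; it matters only in Theorem~\ref{thm:matsumoto-radical}. Finally, a Gröbner basis of $J$ for a block order with $X\gg Y$ yields generators of $J\cap\Fq[Y]$ by the elimination theorem, which makes the statement effective.
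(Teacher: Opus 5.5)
Your proof is correct and complete: the identity $J\cap\Fq[Y]=\psi^{-1}(\langle F\rangle)$ follows from the substitution homomorphism $\Phi$, whose kernel is $\langle y_1-x_1^q,\ldots,y_n-x_n^q\rangle$ because $g\equiv\Phi(g)$ modulo this ideal, and combined with $\varphi=\psi\circ\iota^{-1}$ this gives exactly the claim. The paper does not prove this statement itself but cites Matsumoto~\cite{m} for it, and your argument is the standard preimage-by-elimination proof underlying that proposition, so there is nothing to add.
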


By computing a Gr\"obner basis of $J$ with respect to a block order with $X \gg Y$, one can obtain a generating set for $\varphi^{-1}(\langle F \rangle)$.

Consequently, the radical can be computed by the following algorithm.

\begin{algorithm}[H]
  \caption{\sf{Radical} (Matsumoto's algorithm)}
  \label{alg:radical}
  \begin{algorithmic}[1]
    \Require $F \subset \Fq[X]$: a generating set of an ideal.
    \Ensure a generating set of the radical $\sqrt{\spn{F}}$.

    \Loop
      \State $B \gets F \cup \{y_1-x_1^q,\ldots,y_n-x_n^q\} \subset \Fq[X,Y]$;
      \State $G \gets$ a Gr\"obner basis of $\spn{B}$ with respect to a block order $X \gg Y$;
      \State $H \gets$ the set obtained from $G \cap \Fq[Y]$ by substituting $x_i$ for each $y_i$;

      \If{$\spn{F}=\spn{H}$}
        \State \Return $F$;
      \Else
        \State $F \gets H$;
      \EndIf
    \EndLoop
  \end{algorithmic}
\end{algorithm}

We illustrate Algorithm~\ref{alg:radical} with the following example.
\begin{example}
Consider
$F = \{x_1^9+x_2^3,\ x_2^5-x_1x_2^3\} \subset \mathbb{F}_3[x_1,x_2]$.
To compute the inverse image of $\langle F\rangle$ under the Frobenius
map, introduce new variables $y_1, y_2$ and consider the ideal
$J = \langle F \cup \{y_1-x_1^3,\ y_2-x_2^3\}\rangle
\subset \mathbb{F}_3[x_1,x_2,y_1,y_2]$.
Computing a Gr\"obner basis of $J$ with respect to a block order
satisfying $x_1, x_2 \gg y_1, y_2$, we obtain
$J\cap \mathbb{F}_3[y_1,y_2]
= \langle y_1^3+y_2,\ y_2^3-y_1y_2\rangle$,
and hence, by replacing $y_1, y_2$ with $x_1, x_2$,
\[
  F_1 = \{x_1^3+x_2,\ x_2^3-x_1x_2\},
  \qquad
  \varphi^{-1}(\langle F\rangle)=\langle F_1\rangle.
\]
Since $\langle F\rangle \neq \langle F_1\rangle$, the algorithm
continues. Repeating the same procedure, we obtain
\[
  F_2
  =
  \{x_1^3+x_2,\ x_1x_2^2-x_1^2,\ x_2^3-x_1x_2\},
  \qquad
  \varphi^{-1}(\langle F_1\rangle)=\langle F_2\rangle,
\]
and
\[
  F_3
  =
  \{x_1^3+x_2,\ x_2^2-x_1\},
  \qquad
  \varphi^{-1}(\langle F_2\rangle)=\langle F_3\rangle.
\]
Moreover, one more application gives
$\varphi^{-1}(\langle F_3\rangle)=\langle F_3\rangle$.
Therefore, Algorithm~\ref{alg:radical} terminates and returns $F_3$,
i.e., \[\sqrt{\langle F\rangle} = \langle x_1^3+x_2,\ x_2^2-x_1\rangle.\]
\end{example}

%% file: sections/4_main_results.tex
\section{Algorithm for Computing the Radical of a Parametric Ideal}

In this section, we extend Matsumoto's algorithm to the parametric setting via comprehensive Gröbner systems, thereby presenting an algorithm for computing the radical of parametric ideals over the finite field $\Fq$.
Throughout this section, we fix a term order $\prec$ on $\mathrm{Term}(X)$.

\subsection{Parametric radical system}

Our goal is to compute the radical of a parametric ideal.
This problem is formulated in terms of a parametric radical system as follows.

\begin{definition}\label{def:prs}
Let $F$ be a finite subset of $\Fq[U][X]$, and let $E, N\subset \Fq[U]$ be finite sets.
A finite set of triples
\[
\mathcal{G}=\{(E_1,N_1,G_1),\ldots,(E_r,N_r,G_r)\}
\qquad(E_i,N_i\subset \Fq[U],\; G_i\subset \Fq[U][X])
\]
is called a \textbf{parametric radical system} of $\spn{F}$ on $\mathbf{V}(E)\setminus \mathbf{V}(N)$ if the following conditions are satisfied:
\begin{enumerate}
  \renewcommand{\labelenumi}{(\roman{enumi})}
  \item
  The sets $\mathbf{V}(E_1)\setminus \mathbf{V}(N_1),\ldots,\mathbf{V}(E_r)\setminus \mathbf{V}(N_r)$ form an algebraic partition of $\mathbf{V}(E)\setminus \mathbf{V}(N)$.
  \item
  For any $a\in \mathbf{V}(E_i)\setminus \mathbf{V}(N_i)$, the set $\pi_a(G_i)$ generates the radical ideal $\sqrt{\spn{\pi_a(F)}_{\Fq[X]}}$.
\end{enumerate}
Each triple $(E_i,N_i,G_i)$ is called a \textbf{segment} of $\mathcal{G}$.
We say that $\mathcal{G}$ is a parametric radical system of $F$ if $\mathbf{V}(E)\setminus \mathbf{V}(N)=\Fq^m$.
\end{definition}

This notion follows the definition of a parametric radical system introduced in \cite{ktn}.

As in Matsumoto's algorithm, Theorem~\ref{thm:matsumoto-radical} implies that the following procedure computes the radical of a parametric ideal:
\begin{enumerate}
  \renewcommand{\labelenumi}{(\roman{enumi})}
  \item
  Given a generating set $F \subset \Fq[U][X]$ of a parametric ideal, compute a comprehensive Gröbner system $\{(E_i, N_i, G_i)\}_i$ of $\varphi^{-1}(\spn{F})$ on $\mathbf{V}(E)\setminus \mathbf{V}(N)$.
  \item
  For each segment $(E_i, N_i, G_i)$, compute the locally closed subset $S_i \subset \mathbf{V}(E_i)\setminus \mathbf{V}(N_i)$ of the parameter space that satisfies, for all $a \in S_i$, $\langle \pi_a(G_i) \rangle =\langle \pi_a(F)\rangle$.
  \item
  If $S_i = \mathbf{V}(E_i)\setminus \mathbf{V}(N_i)$, equivalently, if
  \[
    (\mathbf{V}(E_i)\setminus \mathbf{V}(N_i))\setminus S_i = \emptyset,
  \]
  then terminate on this branch.
  Otherwise, replace $F$ by $G_i$, update $E$ and $N$ so that
  \[
    \mathbf{V}(E)\setminus \mathbf{V}(N) = (\mathbf{V}(E_i)\setminus \mathbf{V}(N_i))\setminus S_i,
  \]
  and repeat the procedure on this branch.
\end{enumerate}

Therefore, the computation of the radical reduces to the following tasks: computing the inverse image under the Frobenius map, determining the parameter regions where two parametric ideals coincide, and testing whether these regions are empty.

\subsection{Inverse Frobenius map with parameters}

Here we give a method for computing the inverse image $\varphi^{-1}(\spn{F})$ under the Frobenius map, where $F \subset \Fq[U][X]$.
By Theorem \ref{thm:image} and the elimination theorem, an algorithm for computing the inverse image can be constructed using a comprehensive Gr\"obner system of $\langle F \cup \{y_1-x_1^q,\ldots, y_n-x_n^q\}\rangle$ with respect to a block order satisfying $X\gg Y$, as follows.

\begin{algorithm}[H]
  \caption{\sf{ParaInverseFrobeniusMap}}
  \label{alg:para-inverse-frobenius-map}
  \begin{algorithmic}[1]
    \Require $(E,N,F)$: finite sets $E, N\subset \Fq[U]$ and $F\subset \Fq[U][X]$.
    \Ensure $\mathcal{H}$: a CGS of $\varphi^{-1}(\spn{F})$ on $\mathbf{V}(E)\setminus \mathbf{V}(N)$.
    \State $B \gets F \cup \{y_1-x_1^q,\ldots,y_n-x_n^q\} \subset \Fq[U][X,Y]$;
    \State $\mathcal{G} \gets$ a CGS of $\spn{B}$ on $\mathbf{V}(E)\setminus \mathbf{V}(N)$ with respect to a block order $X\gg Y$;
    \State $\mathcal{H} \gets \emptyset$;
    \ForAll{$(E_i, N_i, G_i) \in \mathcal{G}$}
      \State $\tilde{G}_i \gets G_i \cap \Fq[U][Y]$;
      \State $H_i \gets$ the set obtained from $\tilde{G}_i$ by replacing each variable $y_j$ with $x_j$;
      \State $\mathcal{H} \gets \mathcal{H} \cup \{(E_i, N_i, H_i)\}$;
    \EndFor
    \State \Return $\mathcal{H}$;
  \end{algorithmic}
\end{algorithm}

Note that, by the elimination theorem, the output of the algorithm above is a CGS of $\varphi^{-1}(\langle F \rangle)$.

\subsection{Locally closed sets for ideal equality}

Here we discuss a method for computing the locally closed set on which two parametric ideals coincide.
Let $F$ be a finite set in $\mathbb{F}_q[U][X]$ and let $(E,N, G)$ be a segment of a CGS of $\langle F \rangle$ where $G\neq \{0\}$. 
Let ${\mathcal H}$ be the output of $\mathsf{ParaInverseFrobeniusMap}(E,N,G)$ and let $(\tilde{E},\tilde{N},\tilde{G})$ be a segment of ${\mathcal H}$ where $\tilde{G}\neq \{0\}$.
Then the following four conditions hold.
\begin{enumerate}
  \renewcommand{\labelenumi}{(\arabic{enumi})}
  \item
    For every $a\in \mathbf{V}(E)\setminus \mathbf{V}(N)$, the set $\pi_a(G)$ is a Gröbner basis of $\langle \pi_a(G)\rangle$ with respect to $\prec$.
  \item
    For every $a\in \mathbf{V}(E)\setminus \mathbf{V}(N)$ and every $g\in G$, we have $\pi_a(\lc{g})\neq 0$.
  \item
    For every $a\in \mathbf{V}(\tilde{E})\setminus \mathbf{V}(\tilde{N})$, we have
    $\spn{\pi_a(G)}\subset \spn{\pi_a(\tilde{G})}$.
  \item
    The inclusion $\mathbf{V}(E)\setminus \mathbf{V}(N)\supset \mathbf{V}(\tilde{E})\setminus \mathbf{V}(\tilde{N})$ holds.
\end{enumerate}

Conditions (1) and (2) follow from the fact that $(E,N,G)$ is a segment of a CGS.
Moreover, condition (3) follows from Theorem~\ref{thm:matsumoto-radical}.
Therefore, to verify the equality of ideals  $\langle \pi_a(G) \rangle= \langle \pi_a(\tilde{G}) \rangle$, 
it remains to determine whether the reverse inclusion $\langle \pi_a(G) \rangle \supset \langle \pi_a(\tilde{G}) \rangle$.

To test this inclusion, we introduce an analogue of the normal form in the parametric setting.

\begin{proposition}\label{prop:para_rem}
  Let $F$ be a finite set in $\mathbb{F}_q[U][X]$ and let $(E,N, G)$ be a segment of a CGS of $\langle F \rangle$ where $G\neq \{0\}$. 
  Let ${\mathcal H}$ be the output of $\mathsf{ParaInverseFrobeniusMap}(E,N,G)$ and let $(\tilde{E},\tilde{N},\tilde{G})$ be a segment of ${\mathcal H}$ where $\tilde{G}\neq \{0\}$.
  For $f\in \Fq[U][X]$, regard $f$ and the elements of $G$ as elements of $\Fq(U)[X]$, and let $\rho_G(f)\in \Fq(U)[X]$ be the remainder obtained by dividing $f$ by $G$.
  Then the following statements hold:
  \begin{enumerate}
    \renewcommand{\labelenumi}{(\roman{enumi})}
    \item
      There exists $c_f\in \Fq[U]\setminus\{0\}$ such that
      $c_f\,\rho_G(f)\in \Fq[U][X]$ and
      $\pi_a(c_f)\neq 0$ for every $a \in \mathbf{V}(E)\setminus \mathbf{V}(N)$.
    \item
      Fix one such $c_f$ as in~(i),
      then for every $a \in \mathbf{V}(E)\setminus \mathbf{V}(N)$, the condition
      $\pi_a(c_f\,\rho_G(f))=0$
      is equivalent to
      $\pi_a(f)\in \spn{\pi_a(G)}$.
  \end{enumerate}
\end{proposition}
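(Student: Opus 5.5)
We follow the division algorithm of $f$ by $G=\{g_1,\dots,g_s\}$ over $\Fq(U)$ and make the denominators explicit. In the generic division, every step cancels a term of the current dividend using some $g_j$ whose leading power product divides that term. This requires dividing by $\lc{g_j}\in\Fq[U]$, and no other denominators ever appear. A finite run of the division therefore gives a representation
\[
  f=\sum_{j} Q_j\, g_j+\rho_G(f),\qquad Q_j,\rho_G(f)\in \Fq[U]_{h}[X],
\]
where $h=\prod_{g\in G}\lc{g}$. So all coefficients lie in the localization $\Fq[U][h^{-1}]$.

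For part (i), we take $c_f=h^{k}$ with $k$ large enough (for instance, the number of reduction steps). Then both $c_f\rho_G(f)$ and all the $c_fQ_j$ lie in $\Fq[U][X]$. By condition (2) of the list preceding the proposition, $\pi_a(\lc{g})\neq 0$ for all $g\in G$ and all $a\in\mathbf{V}(E)\setminus\mathbf{V}(N)$. Hence $\pi_a(c_f)\neq 0$ on that set, and $c_f\neq 0$.

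For part (ii), let $c_f$ be any element with the properties in (i) and fix $a\in\mathbf{V}(E)\setminus\mathbf{V}(N)$. The key step is a compatibility statement: specialization commutes with the division. Since the leading coefficients of $G$ do not vanish at $a$, we have $\lpp{\pi_a(g)}=\lpp{g}$. Running the division of $\pi_a(f)$ by $\pi_a(G)$ step by step alongside the generic one, we would like each step to specialize to a legitimate step. The trouble is that a term of the generic dividend may have a coefficient that vanishes at $a$, so the two runs can diverge.

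To avoid comparing runs, we argue structurally instead. Clearing denominators in the identity above gives
\[
  c_f f=\sum_j (c_fQ_j)g_j+c_f\rho_G(f)
\]
in $\Fq[U][X]$, and applying $\pi_a$ yields
\[
  \pi_a(c_f)\pi_a(f)-\pi_a(c_f\rho_G(f))\in\spn{\pi_a(G)}.
\]
Because $\pi_a(c_f)$ is a nonzero constant, this shows that $\pi_a(f)\in\spn{\pi_a(G)}$ if and only if $\pi_a(c_f\rho_G(f))\in\spn{\pi_a(G)}$. Next, no term of $\rho_G(f)$ is divisible by any $\lpp{g}=\lpp{\pi_a(g)}$. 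Specialization can only delete terms, so $r:=\pi_a(c_f\rho_G(f))$ is still reduced with respect to $\pi_a(G)$. By condition (1), $\pi_a(G)$ is a Gröbner basis of $\spn{\pi_a(G)}$. A reduced polynomial lies in the ideal exactly when it is zero, since otherwise its leading term would be divisible by some $\lpp{\pi_a(g)}$. This gives the equivalence in (ii).

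Finally, the choice of $c_f$ does not matter. If $c$ and $c'$ both satisfy (i), then $\pi_a(c\rho_G(f))$ and $\pi_a(c'\rho_G(f))$ are nonzero scalar multiples of each other. To see this, write $\rho_G(f)=P/d$ with $P\in\Fq[U][X]$ and $d\in\Fq[U]$. The delicate point is checking that $cP/d$ specializes consistently when $\pi_a(d)=0$. I expect this to be the main obstacle. To handle it, I would simply argue with any $c_f$ directly, rather than with the specific $h^k$. The displayed identity only needs $c_fQ_j\in\Fq[U][X]$, so I would either build that into the choice or multiply both sides by $h^k$. Multiplying by $h^k$ is harmless because $\pi_a(h)\neq0$.
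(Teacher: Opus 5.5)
Your proposal is correct and follows essentially the same route as the paper. Denominators in the division over $\Fq(U)$ come only from the leading coefficients, and $c_f$ is a product of their powers. Specializing the division identity preserves reducedness because $\lpp{\pi_a(g)}=\lpp{g}$, and then the Gr\"obner basis property of $\pi_a(G)$ gives (ii).

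The ``delicate point'' in your last paragraph is not a real obstacle: $\rho_G(f)$ has denominators dividing a power of $h$, and $\pi_a(h)\neq 0$. Your device of multiplying the identity by $h^k$ to handle an arbitrary $c_f$ is exactly what justifies the step $\pi_a(c_f\,\rho_G(f))=\pi_a(c_f)\,\pi_a(\rho_G(f))$, which the paper uses without comment.
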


\begin{proof}
  Let $G=\{g_1,\dots,g_s\}$.
  We first prove~(i).
  By the division algorithm, only $\lc{g_1},\ldots,\lc{g_s}$ can appear in the denominators of the coefficients of the quotients and the remainder obtained by dividing $f$ by $G$.
  Hence there exist nonnegative integers $d_1,\ldots,d_s$ such that
  \[
    \lc{g_1}^{d_1}\cdots\lc{g_s}^{d_s}\rho_G(f)\in \Fq[U][X].
  \]
  Set
  \[
    c_f=\lc{g_1}^{d_1}\cdots\lc{g_s}^{d_s}\in \Fq[U]\setminus\{0\}.
  \]
  Then, by the definition of a CGS, for every $a \in \mathbf{V}(E)\setminus \mathbf{V}(N)$ we have
  $\pi_a(c_f)\neq 0$.

  Next, we prove~(ii).
  By the definition of $\rho_G(f)$, there exist $q_1,\dots,q_s\in \Fq(U)[X]$ such that
  \[
    f=\sum_{i=1}^s q_i g_i+\rho_G(f).
  \]
  Moreover, no term of $\rho_G(f)$ is divisible by any of
  $\lpp{g_1},\ldots,\lpp{g_s}$.
  Take any $a \in \mathbf{V}(E)\setminus \mathbf{V}(N)$.
  Then
  \[
    \pi_a(f)=\sum_{i=1}^s \pi_a(q_i)\pi_a(g_i)+\pi_a(\rho_G(f)).
  \]
  Since $\lpp{\pi_a(g_i)}=\lpp{g_i}$, no term of $\pi_a(\rho_G(f))$ is divisible by any of
  $\lpp{\pi_a(g_1)},\ldots,\lpp{\pi_a(g_s)}$.
  Therefore, $\pi_a(\rho_G(f))$ is the remainder obtained by dividing $\pi_a(f)$ by $\pi_a(G)$.
  Since $\pi_a(G)$ is a Gröbner basis of $\spn{\pi_a(G)}$, $\pi_a(\rho_G(f))$ coincides with the remainder obtained by dividing $\pi_a(f)$ by $\pi_a(G)$.
  Hence,
  \[
    \pi_a(c_f\,\rho_G(f))=\pi_a(c_f)\,\pi_a(\rho_G(f))=0
  \]
  if and only if
  $\pi_a(f)\in \spn{\pi_a(G)}$.
\end{proof}

\begin{definition}
  With the same notation as in Proposition~\ref{prop:para_rem}, let $f \in \mathbb{F}_q[U][X]$ and fix $c_f \in \mathbb{F}_q[U]\setminus \{0\}$ satisfying (i). We then define
  \[
    \prem{G}{f}:=c_f\,\rho_G(f).
  \]
\end{definition}

Based on the proposition above, we obtain the following criterion for computing the locally closed set on which two parametric ideals coincide.

\begin{proposition}\label{prop:ideal_equality} 
  Let $F$ be a finite set in $\mathbb{F}_q[U][X]$ and let $(E,N, G)$ be a segment of a CGS of $\langle F \rangle$ where $G\neq \{0\}$. 
  Let ${\mathcal H}$ be the output of $\mathsf{ParaInverseFrobeniusMap}(E,N,G)$ and let $(\tilde{E},\tilde{N},\tilde{G})$ be a segment of ${\mathcal H}$ where $\tilde{G}\neq \{0\}$.
  Write
  \[
    \tilde{G}=\{g_1,\dots,g_r\}, 
    \qquad
    r_i:=\prem{G}{g_i}.
  \]
  Let $H_i$ be the set of all coefficients of $r_i$, and put
  \[
    H:=\bigcup_i H_i.
  \]
  Then the following statements hold.
  
  \begin{enumerate}
    \renewcommand{\labelenumi}{(\roman{enumi})}
    \item
      For every
      $a\in\bigl(\mathbf{V}(\tilde E)\setminus \mathbf{V}(\tilde N)\bigr)\cap \mathbf{V}(H)$,
      we have
      \[
        \spn{\pi_a(G)}=\spn{\pi_a(\tilde G)}.
      \]
    \item
      For every
      $a\in (\mathbf{V}(\tilde{E})\setminus \mathbf{V}(\tilde{N}))\setminus \mathbf{V}(H)$,
      we have
      \[
        \spn{\pi_a(G)}\neq \spn{\pi_a(\tilde G)}.
      \]
  \end{enumerate}
\end{proposition}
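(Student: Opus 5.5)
Both parts follow from Proposition~\ref{prop:para_rem}(ii) together with conditions (3) and (4) listed before it. The key observation is elementary: for $a \in \Fq^m$, $\pi_a(r_i) = 0$ in $\Fq[X]$ if and only if $\pi_a$ kills every coefficient of $r_i$. Here $r_i \in \Fq[U][X]$ by Proposition~\ref{prop:para_rem}(i), and its coefficients are the elements of $H_i$. So $a \in \mathbf{V}(H)$ if and only if $\pi_a(r_i)=0$ for all $i$. Note that we take $a\in \mathbf{V}(\tilde E)\setminus\mathbf{V}(\tilde N)$, and by condition (4) this point also lies in $\mathbf{V}(E)\setminus\mathbf{V}(N)$. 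Hence Proposition~\ref{prop:para_rem}(ii) applies at $a$ with $f=g_i$.

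For (i), take $a\in(\mathbf{V}(\tilde E)\setminus\mathbf{V}(\tilde N))\cap\mathbf{V}(H)$. Then $\pi_a(r_i)=\pi_a(\prem{G}{g_i})=0$ for every $i$. By Proposition~\ref{prop:para_rem}(ii), $\pi_a(g_i)\in\spn{\pi_a(G)}$ for all $i$, so $\spn{\pi_a(\tilde G)}\subset\spn{\pi_a(G)}$. Combined with condition (3), $\spn{\pi_a(G)}\subset\spn{\pi_a(\tilde G)}$, we get equality.

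For (ii), take $a$ in the region but outside $\mathbf{V}(H)$. Then some coefficient of some $r_i$ does not vanish at $a$, so $\pi_a(r_i)\neq 0$. By Proposition~\ref{prop:para_rem}(ii), $\pi_a(g_i)\notin\spn{\pi_a(G)}$. Since $\pi_a(g_i)\in\spn{\pi_a(\tilde G)}$, the two ideals differ.

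The only point needing care is that the equivalence in Proposition~\ref{prop:para_rem}(ii) is stated only on $\mathbf{V}(E)\setminus\mathbf{V}(N)$. That is exactly why condition (4) must be invoked. It is also why the fixed $c_f$, which is nonvanishing there, gives a genuine equivalence and not just one implication. Beyond this the argument is routine.
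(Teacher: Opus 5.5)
Your proof is correct and matches the paper's argument: both parts come from Proposition~\ref{prop:para_rem}(ii) applied to each $g_i$, and the inclusion $\spn{\pi_a(G)}\subset\spn{\pi_a(\tilde G)}$ from the inverse Frobenius step supplies the other half of (i). Your explicit use of condition (4), which places $a$ in $\mathbf{V}(E)\setminus\mathbf{V}(N)$ so that Proposition~\ref{prop:para_rem} applies, is a step the paper leaves implicit.
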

\begin{proof}
  We first prove~(i).
  Take any
  $a\in\bigl(\mathbf{V}(\tilde E)\setminus \mathbf{V}(\tilde N)\bigr)\cap \mathbf{V}(H)$.
  Since $(\tilde E,\tilde N,\tilde G)$ is a segment of the output of
  $\mathsf{ParaInverseFrobeniusMap}(E,N,G)$, we have
  $\spn{\pi_a(G)} \subset \spn{\pi_a(\tilde G)}$.
  On the other hand, since $a\in \mathbf{V}(H)$, we have $\pi_a(r_i)=0$
  for each $i=1,\dots,r$, and hence
  $\pi_a(g_i)\in \spn{\pi_a(G)}$ by Proposition~\ref{prop:para_rem}.
  Therefore,
  \[
    \spn{\pi_a(\tilde G)} \subset \spn{\pi_a(G)},
  \]
  and the equality $\spn{\pi_a(G)}=\spn{\pi_a(\tilde G)}$ holds.

  Next, we prove~(ii).
  Take any
  $a\in \bigl(\mathbf{V}(\tilde{E})\setminus \mathbf{V}(\tilde{N})\bigr)\setminus \mathbf{V}(H)$.
  Since $a\notin \mathbf{V}(H)$, there exists some $i$ such that
  $\pi_a(r_i)\neq 0$. By Proposition~\ref{prop:para_rem}, we obtain
  $\pi_a(g_i)\notin \spn{\pi_a(G)}$, and hence
  $\spn{\pi_a(G)}\neq \spn{\pi_a(\tilde G)}$.
\end{proof}

The proposition above yields an algorithm for computing the parameter region on which two parametric ideals coincide.
Note that, we have
\[
  \bigl(\mathbf{V}(\tilde E)\setminus \mathbf{V}(\tilde N)\bigr)\cap \mathbf{V}(H)
  =\mathbf{V}(\tilde{E}\cup H)\setminus \mathbf{V}(\tilde{N}),
\]
\[
  \bigl(\mathbf{V}(\tilde{E})\setminus \mathbf{V}(\tilde{N})\bigr)\setminus \mathbf{V}(H)
  =\mathbf{V}(\tilde{E})\setminus \mathbf{V}(\tilde{N}\cdot H).
\]

\begin{algorithm}[H]
  \caption{\sf{ParaIdealEqualityLCS}}
  \label{alg:ideal-equality}
  \begin{algorithmic}[1]
    \Require $(E,N,G)$: a segment of a CGS, \quad 
    $(\tilde{E},\tilde{N},\tilde{G})$: a segment of the system of inverse Frobenius map of $G$ on $\mathbf{V}(E)\setminus \mathbf{V}(N)$.
    \Ensure $(E_{\mathcal{E}},N_{\mathcal{E}},G_{\mathcal{E}}), (E_{\mathcal{N}},N_{\mathcal{N}},G_{\mathcal{N}})$: segments satisfying
      \[
      \begin{aligned}
      \mathbf{V}(\tilde{E})&\setminus \mathbf{V}(\tilde{N})
      =
      \bigl(\mathbf{V}(E_{\mathcal{E}})\setminus \mathbf{V}(N_{\mathcal{E}})\bigr)
      \;\sqcup\;
      \bigl(\mathbf{V}(E_{\mathcal{N}})\setminus \mathbf{V}(N_{\mathcal{N}})\bigr),\\
      &\forall a\in \mathbf{V}(E_{\mathcal{E}})\setminus \mathbf{V}(N_{\mathcal{E}}),\ 
      \spn{\pi_a(G)}=\spn{\pi_a(\tilde{G})},\\
      &\forall a\in \mathbf{V}(E_{\mathcal{N}})\setminus \mathbf{V}(N_{\mathcal{N}}),\ 
      \spn{\pi_a(G)}\neq \spn{\pi_a(\tilde{G})}.
      \end{aligned}
      \]
    \State $\tilde{G} \gets \{g_1,\dots,g_r\}$;
    \State $H \gets \emptyset$;
    \ForAll{$g_i\in\tilde{G}$}
      \State $r_i \gets \prem{G}{g_i}$;
      \State $H \gets H \cup \{\text{all coefficients of } r_i\}$;
    \EndFor
    \State $\mathcal{E} \gets (\tilde{E}\cup H,\tilde{N},G)$;
    \State $\mathcal{N} \gets (\tilde{E},\tilde{N}\cdot H,\tilde{G})$;
    \State \Return $\mathcal{E}, \mathcal{N}$;
  \end{algorithmic}
\end{algorithm}


\subsection{Emptiness of locally closed sets in $\Fq^m$}

Next, we present an algorithm for determining whether a given locally closed set
$\mathbf{V}(E)\setminus \mathbf{V}(N)$ is empty.
To this end, we use the following proposition.

\begin{proposition}\label{prop:emptiness}
  Let $E,N\subset\Fq[U]$ be finite sets, and let
  $\Phi=\{u_1^q-u_1,\dots,u_m^q-u_m\}$.
  Then the following statements are equivalent:
  \begin{enumerate}
    \renewcommand{\labelenumi}{(\roman{enumi})}
    \item
      $\mathbf{V}(E)\setminus \mathbf{V}(N)$ is empty.
    \item
      $N\subset\langle E\cup\Phi\rangle$.
  \end{enumerate}
\end{proposition}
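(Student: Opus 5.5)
The plan is to show that $\langle E\cup\Phi\rangle$ is exactly the vanishing ideal of the finite set $\mathbf{V}(E)\subset\Fq^m$. Emptiness of $\mathbf{V}(E)\setminus\mathbf{V}(N)$ means precisely that every $g\in N$ vanishes on $\mathbf{V}(E)$. With that identification, the equivalence of (i) and (ii) is immediate.

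The direction (ii)$\Rightarrow$(i) is elementary. Every $a\in\Fq^m$ satisfies $a_j^q=a_j$, so every element of $\Phi$ vanishes on $\Fq^m$. Hence every element of $\langle E\cup\Phi\rangle$ vanishes on $\mathbf{V}(E)$. If $N\subset\langle E\cup\Phi\rangle$, then for each $a\in\mathbf{V}(E)$ we get $g(a)=0$ for all $g\in N$. Thus $a\in\mathbf{V}(N)$, and the set difference is empty.

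For (i)$\Rightarrow$(ii), assume $\mathbf{V}(E)\subset\mathbf{V}(N)$ and fix $g\in N$, so $g$ vanishes on $\mathbf{V}(E)$. I would show $g\in I:=\langle E\cup\Phi\rangle$ in three steps.

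\textbf{Step 1: $I$ is radical.} Since $\Phi\subset I$ and each $u_j^q-u_j$ is separable, the quotient $\Fq[U]/I$ is a quotient of $\Fq[U]/\langle\Phi\rangle\cong\prod_{a\in\Fq^m}\Fq$. Here the isomorphism comes from the Chinese remainder theorem applied to $\prod_{c\in\Fq}(u_j-c)=u_j^q-u_j$ in each variable. A finite product of fields is reduced, and any quotient of such a product is again a product of some of the factors, hence reduced. So $I$ is radical.

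\textbf{Step 2: the zeros of $I$ over $\overline{\Fq}$ are rational.} Every point of $\overline{\Fq}^m$ killed by $\Phi$ has all coordinates in $\Fq$. So the zero set of $I$ in $\overline{\Fq}^m$ is exactly $\mathbf{V}(E)\subset\Fq^m$.

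\textbf{Step 3: conclude.} By Hilbert's Nullstellensatz over $\overline{\Fq}$, together with the fact that $I\overline{\Fq}[U]\cap\Fq[U]=I$, the polynomial $g$ lies in $\sqrt{I}=I$.

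Alternatively, to avoid the algebraic closure, I can argue directly from the product decomposition. The ideals of $\prod_{a\in\Fq^m}\Fq$ correspond to subsets of $\Fq^m$. The image of $I$ is the ideal of functions vanishing on the subset $\mathbf{V}(E)$, since $I$ is generated by the images of $E$. So $g\in I$ iff $g$ vanishes on $\mathbf{V}(E)$.

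The main obstacle is this last step: making rigorous that $\langle E\cup\Phi\rangle$ equals the full vanishing ideal of $\mathbf{V}(E)$. I would present it through the CRT isomorphism $\Fq[U]/\langle\Phi\rangle\cong\Fq^{\Fq^m}$, given by evaluation at all points. Under this isomorphism the image of $E$ generates the ideal of functions supported off $\mathbf{V}(E)$. Concretely, for $a\notin\mathbf{V}(E)$ some $e\in E$ has $e(a)\neq0$, and multiplying $e$ by the indicator $\delta_a$ produces $\delta_a$ in the ideal. Therefore every function vanishing on $\mathbf{V}(E)$, in particular the image of $g$, lies in the image of $I$, and pulling back gives $g\in I$.
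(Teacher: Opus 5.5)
Your proof is correct, and its top-level reduction matches the paper's. Both arguments come down to identifying $\spn{E\cup\Phi}$ with the vanishing ideal $\mathbf{I}(\mathbf{V}(E))$. The direction (ii)$\Rightarrow$(i) uses only the trivial inclusion $\spn{E\cup\Phi}\subset\mathbf{I}(\mathbf{V}(E))$, and (i)$\Rightarrow$(ii) uses the reverse inclusion.

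The difference is in how that identity is obtained. The paper takes it as a black box: it cites the Nullstellensatz over $\Fq$ of Gao, Platzer and Clarke (Theorem~\ref{thm:finite-nullstellensatz}), so its proof is two lines. You prove the needed inclusion yourself, in one of two ways:
\begin{enumerate}
\item via radicality of $I=\spn{E\cup\Phi}$, Hilbert's Nullstellensatz over $\overline{\Fq}$, and the descent $I\overline{\Fq}[U]\cap\Fq[U]=I$;
\item more elementarily, via the evaluation isomorphism $\Fq[U]/\spn{\Phi}\cong\Fq^{\Fq^m}$ and the indicator-function argument, which avoids the algebraic closure entirely.
\end{enumerate}
Your version makes the proposition self-contained; the paper's version is shorter.

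The one step you state tersely is the passage from the one-variable CRT to the multivariate isomorphism. It deserves an explicit sentence, because injectivity of evaluation on $\Fq[U]/\spn{\Phi}$ is exactly the case $E=\emptyset$ of the proposition. Either of the following closes it:
\begin{enumerate}
\item a tensor-product remark, $\Fq[U]/\spn{\Phi}\cong\bigotimes_j\Fq[u_j]/(u_j^q-u_j)$;
\item a dimension count: monomials with all exponents $<q$ span $\Fq[U]/\spn{\Phi}$, so its dimension is at most $q^m$, and evaluation onto $\Fq^{\Fq^m}$ is surjective by interpolation.
\end{enumerate}
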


This proposition can be proved by using the following Nullstellensatz over $\Fq$
due to Gao \etal~\cite{gpc}.

\begin{theorem}\label{thm:finite-nullstellensatz}
  \cite[Theorem 2.2]{gpc}
  Let $F\subset\Fq[U]$ be a finite set, and let
  $\Phi=\{u_1^q-u_1,\dots,u_m^q-u_m\}$.
  Then
  \[
    \mathbf{I}(\mathbf{V}(F))=\langle F\cup\Phi\rangle,
  \]
  where $\mathbf{I}(S) = \{ f \in \mathbb{F}_q[U] \mid f(a) = 0
\text{ for all } a \in S \}$ for a subset $S \subset \mathbb{F}_q^m$.
\end{theorem}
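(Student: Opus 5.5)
We prove the two inclusions separately. The inclusion $\langle F\cup\Phi\rangle\subset\mathbf{I}(\mathbf{V}(F))$ is immediate. Every $f\in F$ vanishes on $\mathbf{V}(F)$ by definition. Every $u_i^q-u_i$ vanishes on all of $\Fq^m$, since $c^q=c$ for $c\in\Fq$. And $\mathbf{I}(\mathbf{V}(F))$ is an ideal. The substance lies in the reverse inclusion. We plan to work in the quotient $R=\Fq[U]/\langle\Phi\rangle$ and show that it is exactly the ring of $\Fq$-valued functions on $\Fq^m$.

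\emph{Step 1 (key lemma).} If $g\in\Fq[U]$ vanishes at every point of $\Fq^m$, then $g\in\langle\Phi\rangle$. To prove this, divide $g$ by $\Phi$, i.e.\ repeatedly replace $u_i^q$ by $u_i$. This yields $g\equiv g_0 \pmod{\langle\Phi\rangle}$ with $\deg_{u_i}g_0<q$ for every $i$, and $g_0$ still vanishes on all of $\Fq^m$. We then show $g_0=0$ by induction on $m$.
\begin{itemize}
\item For $m=1$, a nonzero univariate polynomial of degree $<q$ cannot have $q$ roots.
\item For the induction step, write $g_0=\sum_{k<q} c_k(u_1,\dots,u_{m-1})\,u_m^k$. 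Fix $(a_1,\dots,a_{m-1})\in\Fq^{m-1}$. The univariate case gives $c_k(a_1,\dots,a_{m-1})=0$ for all $k$. So each $c_k$ vanishes on $\Fq^{m-1}$ and has degree $<q$ in each variable, hence $c_k=0$ by induction.
\end{itemize}
We expect this lemma to be the main technical point; everything else is bookkeeping.

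\emph{Step 2 (indicator polynomials).} For $a=(a_1,\dots,a_m)\in\Fq^m$ put
\[
\delta_a=\prod_{i=1}^m\bigl(1-(u_i-a_i)^{q-1}\bigr).
\]
Since $c^{q-1}=1$ for every $c\in\Fq\setminus\{0\}$, $\delta_a$ takes the value $1$ at $a$ and $0$ at every other point of $\Fq^m$.

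Now take any $f\in\mathbf{I}(\mathbf{V}(F))$. The polynomials $f$ and $\sum_{a\in\Fq^m}f(a)\,\delta_a$ agree as functions on $\Fq^m$. Because $f$ vanishes on $\mathbf{V}(F)$, the second sum reduces to $\sum_{a\notin\mathbf{V}(F)}f(a)\,\delta_a$. By Step 1 the difference lies in $\langle\Phi\rangle$, so
\[
f\equiv\sum_{a\notin\mathbf{V}(F)}f(a)\,\delta_a \pmod{\langle\Phi\rangle}.
\]

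\emph{Step 3 (each $\delta_a$ lies in the ideal).} Fix $a\notin\mathbf{V}(F)$. By definition of $\mathbf{V}(F)$ there is some $g_a\in F$ with $g_a(a)\neq0$. The polynomials $g_a(a)^{-1}g_a\,\delta_a$ and $\delta_a$ agree at every point of $\Fq^m$: both equal $1$ at $a$ and $0$ elsewhere. By Step 1 again,
\[
\delta_a\equiv g_a(a)^{-1}g_a\,\delta_a \pmod{\langle\Phi\rangle},
\]
so $\delta_a\in\langle F\cup\Phi\rangle$.

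Combining Steps 2 and 3, $f$ is congruent modulo $\langle\Phi\rangle$ to an $\Fq$-linear combination of elements of $\langle F\cup\Phi\rangle$. Hence $f\in\langle F\cup\Phi\rangle$, which gives $\mathbf{I}(\mathbf{V}(F))\subset\langle F\cup\Phi\rangle$ and completes the proof. Note that the argument needs only finiteness of $\Fq^m$ and never passes to an algebraic closure, which is why it avoids the classical Nullstellensatz.
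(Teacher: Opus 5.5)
Your proof is correct and complete. Note that the paper gives no proof of this statement: it imports the result from \cite{gpc}. The proof there, which is the standard one, works over the algebraic closure instead of staying inside $\Fq$. The roots of $u_i^q-u_i$ in $\overline{\mathbb{F}}_q$ are exactly the elements of $\Fq$, so the zero set of $\langle F\cup\Phi\rangle$ in $\overline{\mathbb{F}}_q^{\,m}$ is exactly $\mathbf{V}(F)$. Hilbert's strong Nullstellensatz then identifies the vanishing ideal with $\sqrt{\langle F\cup\Phi\rangle}$. Seidenberg's lemma shows this ideal is already radical, because for each $i$ it contains $u_i^q-u_i$, which is coprime to its derivative $-1$. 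That route is short once those two theorems are granted, and it places the result among zero-dimensional radical ideals. It does, however, need a standard descent step from $\overline{\mathbb{F}}_q[U]$ back to $\Fq[U]$. Your route uses only $c^q=c$ on $\Fq$ and never leaves $\Fq$. Its ingredients are reduction modulo $\Phi$ to polynomials of degree $<q$ in each variable, the fact that such a polynomial vanishing on all of $\Fq^m$ is zero, and the indicator polynomials $\delta_a$. It also yields an explicit certificate,
\[
f\equiv\sum_{a\notin\mathbf{V}(F)} f(a)\,g_a(a)^{-1}\,g_a\,\delta_a \pmod{\langle\Phi\rangle},
\]
which makes the theorem's use in \textsf{IsEmptyLCS} self-contained and elementary.
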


\begin{proof}[Proof of Proposition~\textnormal{\ref{prop:emptiness}}]
  We first prove (i)$\Rightarrow$(ii).
  Assume that $\mathbf{V}(E)\setminus \mathbf{V}(N)=\emptyset$.
  Then $\mathbf{V}(N)\supset \mathbf{V}(E)$, and hence
  $\mathbf{I}(\mathbf{V}(N)) \subset \mathbf{I}(\mathbf{V}(E))$.
  Therefore, by Theorem~\ref{thm:finite-nullstellensatz},
  \[
    N \subset \mathbf{I}(\mathbf{V}(N)) \subset \mathbf{I}(\mathbf{V}(E))
    = \langle E\cup\Phi\rangle.
  \]
  Next, we prove (ii)$\Rightarrow$(i).
  Assume that $N\subset \langle E\cup\Phi\rangle$.
  By Theorem~\ref{thm:finite-nullstellensatz},
  $N\subset \langle E\cup\Phi\rangle = \mathbf{I}(\mathbf{V}(E))$,
  and hence $\mathbf{V}(N)\supset \mathbf{V}(E)$.
  Therefore, $\mathbf{V}(E)\setminus \mathbf{V}(N)=\emptyset$.
\end{proof}

Therefore, we obtain the following algorithm.

\begin{algorithm}[H]
  \caption{\sf{IsEmptyLCS}}
  \label{alg:emptiness}
  \begin{algorithmic}[1]
    \Require $E,N\subset \Fq[U]$: finite sets.
    \Ensure whether $\mathbf{V}(E)\setminus \mathbf{V}(N)=\emptyset$.

    \State $\Phi \gets \{u_1^q-u_1,\dots,u_m^q-u_m\}$;

    \If{$N=\emptyset$}
      \State \Return $\mathrm{TRUE}$;
    \EndIf
    \ForAll{$p\in N$}
      \If{$p \not\in \langle E\cup\Phi\rangle$}
        \State \Return $\mathrm{FALSE}$;
      \EndIf
    \EndFor
    \State \Return $\mathrm{TRUE}$;
  \end{algorithmic}
\end{algorithm}

\begin{remark}
In the existing algorithms for computing CGSs (cf.~\cite{ksw}), the
parameter space is the Cartesian product of an algebraically closed
field, and hence the emptiness of a locally closed set is checked by
the usual Nullstellensatz. In our setting, the parameter space is
$\mathbb{F}_q^m$, and hence Algorithm~\ref{alg:emptiness} employs the Nullstellensatz
over $\mathbb{F}_q$ (Theorem~\ref{thm:finite-nullstellensatz}).
\end{remark}

\subsection{Main algorithm}

By combining the results above, we can construct an algorithm for computing the radical of a parametric ideal.
Since the ideal equality algorithm $\mathsf{ParaIdealEqualityLCS}$ requires a segment of a CGS as input, we first compute a CGS of $\spn{F}$ as a preprocessing step.
Then, for each resulting segment $(E_i,N_i,G_i)$, we apply the main routine $\mathsf{ParaRadicalMain}$.
By Algorithm~\ref{alg:para-inverse-frobenius-map}, at each step we obtain a CGS of the inverse image under the Frobenius map, and by Proposition~\ref{prop:ideal_equality}, each segment is correctly divided into the part where the ideal coincides with the original one and the part where it does not.
On the region where the two ideals coincide, Matsumoto's theorem implies that the corresponding specialized ideal is already radical. 
In contrast, on the complementary region, we update to a strictly larger ideal and repeat the same procedure.
Therefore, this algorithm yields a parametric radical system of $\spn{F}$.

\begin{algorithm}[H]
  \caption{\sf{ParaRadical}}
  \label{alg:para-radical}
  \begin{algorithmic}[1]
    \Require $F \subset \Fq[U][X]$: a generating set of a parametric ideal.
    \Ensure $\mathcal{R}$: a parametric radical system of $\spn{F}$.
    \State $\mathcal{R} \gets \emptyset$; $\mathcal{G} \gets$ a CGS of $\spn{F}$;
    \ForAll{$(E_i, N_i, G_i) \in \mathcal{G}$}
      \If{$G_i = \{0\}$ or $G_i = \{1\}$}
        \State $\mathcal{R} \gets \mathcal{R} \cup \{(E_i, N_i, G_i)\}$;
      \Else
        \State $\mathcal{R} \gets \mathcal{R} \cup \mathsf{ParaRadicalMain}(E_i, N_i, G_i)$;
      \EndIf
    \EndFor
    \State \Return $\mathcal{R}$;
  \end{algorithmic}
\end{algorithm}

\begin{algorithm}[H]
  \caption{\sf{ParaRadicalMain}}
  \label{alg:para-radical-main}
  \begin{algorithmic}[1]
    \Require $(E, N, G)$: a segment of a CGS.
    \Ensure $\mathcal{R}$: a parametric radical system of $\spn{G}$ on $\mathbf{V}(E)\setminus \mathbf{V}(N)$.
    \State\label{line:para-radical-cgs} $\mathcal{R} \gets \emptyset$;
    \State\label{line:prm-frobenius} $\mathcal{H} \gets \mathsf{ParaInverseFrobeniusMap}(E, N, G)$;
    \ForAll{$(E_i, N_i, G_i) \in \mathcal{H}$}
      \State $((E_\mathcal{E},N_\mathcal{E},G_\mathcal{E}), (E_\mathcal{N},N_\mathcal{N},G_\mathcal{N})) \gets \mathsf{ParaIdealEqualityLCS}((E,N,G), (E_i,N_i,G_i))$;
      \If{not $\mathsf{IsEmptyLCS}(E_\mathcal{E},N_\mathcal{E})$}
        \State $\mathcal{R} \gets \mathcal{R}\cup \{(E_\mathcal{E},N_\mathcal{E},G_\mathcal{E})\}$;
      \EndIf
      \If{not $\mathsf{IsEmptyLCS}(E_\mathcal{N},N_\mathcal{N})$}
        \State $\mathcal{R} \gets \mathcal{R} \cup \mathsf{ParaRadicalMain}(E_\mathcal{N}, N_\mathcal{N}, G_\mathcal{N})$;
      \EndIf
    \EndFor
    \State \Return $\mathcal{R}$;
  \end{algorithmic}
\end{algorithm}

\begin{proposition}\label{prop:para-radical-correct}
  Algorithm~\ref{alg:para-radical} is correct and terminates in finitely many steps.
\end{proposition}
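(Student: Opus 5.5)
We prove correctness and termination separately. For correctness it suffices to show that each call $\mathsf{ParaRadicalMain}(E,N,G)$ returns a parametric radical system of $\spn{G}$ on $\mathbf{V}(E)\setminus\mathbf{V}(N)$. Then Algorithm~\ref{alg:para-radical} is correct for the following reasons. The preprocessing CGS partitions $\Fq^m$ by Proposition~\ref{prop:cgs-correct}. On a segment with $G_i=\{1\}$ or $G_i=\{0\}$, the specialized ideal is the unit or zero ideal, and both are radical. On the remaining segments, $\spn{\pi_a(G_i)}=\spn{\pi_a(F)}$ for every $a$ in the segment, so a radical system of $\spn{G_i}$ there is a radical system of $\spn{F}$ there.

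We establish the claim for $\mathsf{ParaRadicalMain}$ by induction on the recursion tree; this is legitimate once termination is shown, or as a statement about every call that returns.

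\emph{Partition.} By Algorithm~\ref{alg:para-inverse-frobenius-map} and the correctness of the CGS, the segments $(E_i,N_i,G_i)$ of $\mathcal{H}$ partition $\mathbf{V}(E)\setminus\mathbf{V}(N)$. By the two displayed set identities following Proposition~\ref{prop:ideal_equality}, each $\mathbf{V}(E_i)\setminus\mathbf{V}(N_i)$ is the disjoint union of the $\mathcal{E}$-part and the $\mathcal{N}$-part. Only empty parts are discarded, and this is certified by Proposition~\ref{prop:emptiness}. By induction, the recursive call on the $\mathcal{N}$-part partitions that part.

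\emph{Radical property.} On the $\mathcal{E}$-part, Proposition~\ref{prop:ideal_equality}(i) gives $\spn{\pi_a(G)}=\spn{\pi_a(G_i)}=\varphi^{-1}(\spn{\pi_a(G)})$. Theorem~\ref{thm:matsumoto-radical}(ii), in contrapositive form, then says $\spn{\pi_a(G)}$ is radical, so $G_{\mathcal{E}}=G$ is correct there.

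\emph{Recursive part.} On the $\mathcal{N}$-part, $G_{\mathcal{N}}=G_i$ satisfies $\spn{\pi_a(G_i)}=\varphi^{-1}(\spn{\pi_a(G)})$, and Theorem~\ref{thm:matsumoto-radical}(i) gives $\sqrt{\spn{\pi_a(G_i)}}=\sqrt{\spn{\pi_a(G)}}$. So a radical system of $\spn{G_i}$ there is one of $\spn{G}$. One detail must be checked: the recursive call requires $(E_{\mathcal{N}},N_{\mathcal{N}},G_i)$ to be a valid input, so that conditions (1)–(4) preceding Proposition~\ref{prop:para_rem} hold. The Gröbner basis and leading-coefficient conditions hold because $(E_i,N_i,G_i)$ is a CGS segment of $\varphi^{-1}(\spn{G})$, and the $\mathcal{N}$-part is a subset of it. The case $G_i=\{0\}$ cannot occur, because $\spn{\pi_a(G)}\neq 0$ and $\varphi^{-1}$ only enlarges the ideal. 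The case $G_i=\{1\}$ is harmless.

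\emph{Termination.} This is the main obstacle, because there are infinitely many parameter points to control with a single finite recursion. The key observation is that the parameter space $\Fq^m$ is finite. I would argue by contradiction. If the recursion tree were infinite, it would be finitely branching, since each $\mathcal{H}$ is finite. By König's lemma it would then contain an infinite path of calls $(E^{(k)},N^{(k)},G^{(k)})$, each on a nonempty set $S_k=\mathbf{V}(E^{(k)})\setminus\mathbf{V}(N^{(k)})$. These sets are nonempty because a call is made only after the $\mathsf{IsEmptyLCS}$ test, and they are decreasing: $S_0\supset S_1\supset\cdots$. Since each $S_k$ is a finite nonempty set and the sequence decreases, some point $a$ lies in all $S_k$. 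Along the path, Proposition~\ref{prop:ideal_equality}(ii) gives a strictly ascending chain
\[
\spn{\pi_a(G^{(0)})}\subsetneq\spn{\pi_a(G^{(1)})}\subsetneq\cdots
\]
in $\Fq[X]$, which contradicts Noetherianity.

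It remains to note that each individual step terminates. This holds because the CGS computations terminate by Proposition~\ref{prop:cgs-correct} and $\mathsf{IsEmptyLCS}$ is a finite ideal-membership test. Hence the whole algorithm terminates.
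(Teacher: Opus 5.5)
Your proof is correct and follows essentially the paper's argument. It matches on each point: the partition via the CGS $\mathcal{H}$ and the split in $\mathsf{ParaIdealEqualityLCS}$, radicality on the $\mathcal{E}$-part from Proposition~\ref{prop:ideal_equality}(i) with Theorem~\ref{thm:matsumoto-radical}(ii), and termination by fixing a single parameter point $a\in\Fq^m$ on an infinite branch to obtain a strictly ascending chain in $\Fq[X]$. The differences are cosmetic: you extract the branch via K\"onig's lemma and nested nonempty finite sets, where the paper pigeonholes a point lying in infinitely many recursive regions. You also verify details the paper leaves implicit, namely that recursive inputs are valid CGS segments with $G_i\neq\{0\}$ and that $\spn{\pi_a(G_i)}=\spn{\pi_a(F)}$ on the preprocessing segments.
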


\begin{proof}
  Since the ideals $\langle 0 \rangle$ and $\langle 1 \rangle$ are radical,
  it suffices to show that, for each segment $(E, N, G)$ of the CGS
  computed in line~\ref{line:para-radical-cgs} of Algorithm~\ref{alg:para-radical}
  with $G \neq \{0\}, \{1\}$, \textsf{ParaRadicalMain}
  outputs a parametric radical system of $\langle G \rangle$ on
  $\mathbf{V}(E) \setminus \mathbf{V}(N)$ and terminates in finitely
  many steps.
  
We first verify condition (i) of Definition~\ref{def:prs}. By
  Definition~\ref{def:cgs}, the locally closed sets associated with the
  segments of the CGS $\mathcal{H}$ computed in
  line~\ref{line:prm-frobenius} of \textsf{ParaRadicalMain}
  (Algorithm~\ref{alg:para-radical-main}) form an algebraic partition of $\mathbf{V}(E) \setminus \mathbf{V}(N)$, and
  \textsf{ParaIdealEqualityLCS} divides each of them into two disjoint
  locally closed sets. Since only empty sets are discarded, condition (i)
  holds for the output $\mathcal{R}$.
  
  Next, we verify condition (ii). Since $\mathcal{H}$ is the output of
  \textsf{ParaInverseFrobeniusMap}, it is a CGS of
  $\varphi^{-1}(\langle G \rangle)$ on
  $\mathbf{V}(E) \setminus \mathbf{V}(N)$; in particular, for each
  segment $(E_i, N_i, G_i) \in \mathcal{H}$ and each
  $a \in \mathbf{V}(E_i) \setminus \mathbf{V}(N_i)$, we have
  $\langle \pi_a(G_i) \rangle = \varphi^{-1}(\langle \pi_a(G) \rangle)$.
  Hence, by Theorem~\ref{thm:matsumoto-radical}~(i),
  \[
  \langle \pi_a(G) \rangle
  \subset \langle \pi_a(G_i) \rangle
  \subset \sqrt{\langle \pi_a(G) \rangle}.
  \]
  If $a \in \mathbf{V}(E_{\mathcal{E}}) \setminus
  \mathbf{V}(N_{\mathcal{E}})$, then
  $\langle \pi_a(G) \rangle = \sqrt{\langle \pi_a(G) \rangle}$
  by Proposition~\ref{prop:ideal_equality}~(i) and
  Theorem~\ref{thm:matsumoto-radical}~(ii). Since
  $G_{\mathcal{E}} = G$, the set $\pi_a(G_{\mathcal{E}})$ generates the
  radical $\sqrt{\langle \pi_a(G) \rangle}$.
  If $a \in \mathbf{V}(E_{\mathcal{N}}) \setminus
  \mathbf{V}(N_{\mathcal{N}})$, then, by applying the same argument
  repeatedly to the segment
  $(E_{\mathcal{N}}, N_{\mathcal{N}}, G_{\mathcal{N}})$, every segment
  added to $\mathcal{R}$ in the recursive calls also satisfies
  condition (ii).
  
  Finally, we prove the termination. Since
  line~\ref{line:para-radical-cgs} of Algorithm~\ref{alg:para-radical}
  and the loop over the segments involve only finitely many steps, it suffices to
  show that \textsf{ParaRadicalMain} terminates. 
  
  Assume that \textsf{ParaRadicalMain} does not terminate.
  Then recursive calls occur infinitely many times.
  Each recursive call is performed on some locally closed set
  \[
    \mathbf{V}(E_\mathcal{N})\setminus \mathbf{V}(N_\mathcal{N}) \subset \Fq^m.
  \]
  Since $\Fq^m$ is a finite set, there exists some $a\in\Fq^m$ that belongs to infinitely many of these locally closed sets.

  Let $G_0,G_1,\dots$ be the sequence of bases appearing along the recursive branch containing this $a$.
  By Proposition~\ref{prop:ideal_equality}, we have
  \[
    \langle \pi_a(G_0)\rangle
    \subsetneq
    \langle \pi_a(G_1)\rangle
    \subsetneq
    \langle \pi_a(G_2)\rangle
    \subsetneq \cdots.
  \]
  Thus, we obtain a strictly ascending chain of ideals in $\Fq[X]$, which contradicts the Noetherian property of $\Fq[X]$.
  Therefore, Algorithm~\ref{alg:para-radical} terminates.
\end{proof}

\begin{example}

Consider
\[
F=\{x^9+ax^5y+by^3,\ y^9+cy^3\}
\subset \mathbb{F}_3[a,b,c][x,y]
\]
where $a, b, c$ are parameters, with the lexicographic order such that $x \succ y$.
We apply Algorithm~\ref{alg:para-radical} to \(F\).
Since \(\{(\{0\},\{1\},F)\}\) is already a CGS of \(\langle F\rangle\) on
\(\mathbf{V}(0)=\mathbb{F}_3^3\), the radical computation starts from this segment.

Computing the inverse image under the Frobenius map for this segment, we obtain
the following four segments:
\[
\begin{aligned}
S_1={}&
(\{0\},\{abc\},
 \{x^9+ax^5y-bcy,\ x^6y^2+cx^6,\ y^3+cy\}),\\
S_2={}&
(\{a\},\{1\},
 \{x^3+by,\ y^3+cy\}),\\
S_3={}&
(\{b\},\{a\},
 \{x^6+ax^2y,\ y^3+cy\}),\\
S_4={}&
(\{c\},\{ab\},
 \{x^9+ax^5y,\ x^8y+ax^4y^2,\ x^6y^2,\ y^3\}).
\end{aligned}
\]

For \(S_1\), the coincidence region, i.e., the subset on which the ideal
generated by the current basis coincides with its inverse image under the
Frobenius map, is empty.  Hence the algorithm continues recursively.
In the first recursive call, computing the inverse image under the Frobenius map
for \(S_1\) gives
\[
S_{11}=
(\{0\},\{abc\},
 \{x^9+ax^5y-bcy,\ x^2y^2+cx^2,\ y^3+cy\}).
\]
For \(S_{11}\), the coincidence region is again empty.  Thus the algorithm
continues recursively once more.  Computing the inverse image under the
Frobenius map for \(S_{11}\) gives
\[
S_{111}=
(\{0\},\{abc\},
 \{x^9+ax^5y-bcy,\ xy^2+cx,\ y^3+cy\}).
\]
For \(S_{111}\), the coincidence region is the whole locally closed set
associated with the segment.  Therefore the recursion stops, and \(S_{111}\)
is added to the output.

For \(S_2\), the first recursive call computes the inverse image under the
Frobenius map and gives the following three segments:
\[
\begin{aligned}
S_{21}={}&
(\{a\},\{c,bc\},
 \{y^3+cy,\ cx-by\}),\\
S_{22}={}&
(\{a,c\},\{b\},
 \{x^3,\ y\}),\\
S_{23}={}&
(\{a,b,c\},\{1\},
 \{x,\ y\}).
\end{aligned}
\]
For \(S_{21}\), the coincidence region is empty.  Hence the algorithm
continues recursively.  In the next recursive call, computing the inverse image
under the Frobenius map for \(S_{21}\) gives
\[
S_{211}=
(\{a,bc^2-b\},\{c,b,bc\},
 \{y^3+cy,\ cx-by\}).
\]
For \(S_{211}\), the coincidence region is the whole locally closed set
associated with the segment.  Therefore the recursion stops, and \(S_{211}\)
is added to the output.

For \(S_{22}\), computing the inverse image under the Frobenius map gives
\[
S_{221}=
(\{a,c\},\{b\},\{x,y\}).
\]
For \(S_{221}\), the coincidence region is the whole locally closed set
associated with the segment.  Hence the recursion stops, and \(S_{221}\) is
added to the output.

For \(S_{23}\), the coincidence region is the whole locally closed set
associated with the segment.  Hence \(S_{23}\) is added to the output.

The remaining segments \(S_3\) and \(S_4\) are treated in the same way.  For
\(S_3\), the recursive calls give the following final segments:
\[
S_{311}=
(\{b\},\{ac\},
 \{x^5+axy,\ xy^2+cx,\ y^3+cy\}),\quad
S_{3211}=
(\{b,c\},\{a\},
 \{x,y\}).
\]
For \(S_4\), the recursive calls give
\[
S_{4111}=
(\{c\},\{ab\},
 \{x,y\}).
\]

Consequently,
we obtain the following parametric radical system:
\[
\mathcal{R}
=
\{S_{111},S_{211},S_{221},S_{23},S_{311},S_{3211},S_{4111}\}.
\]
\end{example}

%% file: sections/98_Acknowledgements.tex
\section*{Acknowledgements}
The author would like to express his sincere gratitude to Professor
Shunichi Yokoyama for his continuous guidance and valuable advice.
The author is also deeply grateful to Professor Katsusuke Nabeshima for
his careful reading of earlier drafts of this manuscript and for
numerous valuable discussions and suggestions that greatly improved the
paper.
The author is also grateful to Professor Kazuhiro Yokoyama for
introducing Matsumoto's paper, which served as an important starting
point for this research.